\newcommand\R{\mathbb R}
\newcommand\Z{\mathbb Z}
\newtheorem{theorem}{Theorem}[section]
\newtheorem{ex}{Example}[section]
\newtheorem{proposition}{Proposition}[section]
\newtheorem{lemma}{Lemma}[section]
\newtheorem{conj}{Conjecture}[section]
\newtheorem{definition}{Definition}[section]
\newtheorem{notation}{Notation}[section]
\newtheorem{Corollary}{Corollary}[section]
\begin{document}

\title{New estimates for $d_{2,1}$ and $d_{3,2}$ }
\author{Arkadiy Aliev}
\date{}
\maketitle
\begin{abstract}
     Let $K$ be a convex body in $\R^{n}$. Let $ d_{n,n-1}(K)$ be the smallest possible density of a non-separable lattice of translates of $K$. In this paper we prove the estimate $d_{2,1}(K)\leq \frac{\pi\sqrt{3}}{8}$ for $K\subset \R^{2}$, with equality if and only if $K$ is an ellipse, which was conjectured by E. Makai. Also we prove the estimate $d_{3,2}(K)\leq\frac{\pi}{4\sqrt{3}}$ for $K\subset\R^{3}$ using projection bodies. 
    
\end{abstract}

\begin{section}{Preliminaries}
    A set $K\subset\R^{n}$ is a convex body if it is convex, compact, and its interior $int K$ is non-empty. We denote the volume of K by $|K|$. 
    The difference body of $K$ is defined as $K-K:=\{x-y|x,y\in K\}$.  For any $\lambda \in \R$ we define $\lambda K := \{\lambda x | x\in K\}$. $K$ is centrally symmetric if $K = -K$.

    \begin{definition}
        If  $0\in \text{int}K$ then the polar body of $K$ is 
        $$
            K^{\circ}:=\{x\in\R^{n} |\forall y\in K\text{ } \langle y,x \rangle\leq 1 \}.
        $$
    \end{definition}

    \begin{proposition}[Blaschke-Santaló inequality]
        For a centrally symmetric  convex body $K\subset \R^{n}$ we have 
        $$
            |K||K^{\circ}|\leq |B|^2, \text{ where $B$ is the unit ball in $\R^{n}$. } 
        $$
    \end{proposition}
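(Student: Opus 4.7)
The plan is to proceed by iterated Steiner symmetrization about hyperplanes through the origin. Let $S_H(K)$ denote the Steiner symmetrization of $K$ with respect to a hyperplane $H\ni 0$. By Fubini, $|S_H(K)| = |K|$, and the central symmetry of $K$ (together with $H\ni 0$) is preserved, so in particular $S_HK$ still contains the origin in its interior and the polar body is defined.

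The heart of the argument is the monotonicity inequality $|K^{\circ}| \le |(S_HK)^{\circ}|$, which I would deduce from the stronger set-theoretic inclusion $S_H(K^{\circ}) \subseteq (S_HK)^{\circ}$. Choose coordinates so $H = \{x_n=0\}$ and write the chord of $K$ above $y'\in H$ as $[\alpha(y'),\beta(y')]$; central symmetry of $K$ gives $\alpha(-y') = -\beta(y')$. For $(y',t) \in S_H(K^{\circ})$ one has to check that $\langle y', x' \rangle + ts \le 1$ for every $(x',s) \in S_HK$, i.e.\ for every $|s|\le c(x'):=(\beta(x')-\alpha(x'))/2$. Combining the two polarity constraints for $K$ at $(x',\beta(x'))$ and at $(-x',\beta(-x'))=(-x',-\alpha(x'))$ and taking their average eliminates the midpoint shift and yields exactly the required bound. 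This is the step where central symmetry is indispensable; without it only a statement about the Santaló point survives, and one would need a separate translation argument.

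Next, I would invoke the classical result of Gross that a suitable countable sequence of Steiner symmetrizations through the origin drives $K$ to a Euclidean ball $B_0$ of the same volume, the convergence being in the Hausdorff metric. Since polarity is continuous on convex bodies containing $0$ in their interior and the inclusion above propagates under Hausdorff limits, the monotonicity step gives $|K||K^{\circ}| \le |B_0||B_0^{\circ}|$.

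A direct calculation shows $(rB)^{\circ} = r^{-1}B$, hence $|rB|\cdot|(rB)^{\circ}| = r^{n}|B|\cdot r^{-n}|B| = |B|^{2}$, finishing the inequality. The main obstacle is the symmetrization-monotonicity step in the second paragraph; the equality case is then handled by tracing the chain of symmetrizations backwards, where preservation of polar volume at each step forces the extremal bodies to be ellipsoids.
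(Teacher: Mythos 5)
The paper states the Blaschke--Santal\'o inequality as a known classical result and gives no proof, so the only question is whether your argument stands on its own. It does not: the step you yourself call the heart of the argument, the inclusion $S_H(K^{\circ})\subseteq (S_H K)^{\circ}$, is false, even for centrally symmetric $K$ and a hyperplane $H$ through the origin. Take $K=\operatorname{conv}\{\pm(2,1),\pm(0,1)\}\subset\R^{2}$ and $H=\{x_{2}=0\}$. The chord of $K$ over $x_{1}=x\in[0,2]$ is $[x-1,1]$, so $S_{H}K=\operatorname{conv}\{\pm(2,0),\pm(0,1)\}$ and $(S_{H}K)^{\circ}=[-1/2,1/2]\times[-1,1]$. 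On the other hand $K^{\circ}=\operatorname{conv}\{\pm(0,1),\pm(-1,1)\}$ projects onto the full interval $[-1,1]$ of the $x_{1}$-axis (its chord over $x_{1}=3/4$ is $[-1,-1/2]$), so $S_{H}(K^{\circ})$ contains the point $(3/4,0)$, which does not lie in $(S_{H}K)^{\circ}$. The averaging you sketch also fails algebraically: writing the chord of $K$ over $x'$ as $[-g(x'),f(x')]$ and that of $K^{\circ}$ over $y'$ as $[-g^{*}(y'),f^{*}(y')]$, the available polarity constraints average to $\langle y',x'\rangle+\tfrac{1}{2}\bigl(f^{*}f+g^{*}g\bigr)\le 1$, whereas the claimed inclusion needs $\langle y',x'\rangle+\tfrac{1}{4}(f^{*}+g^{*})(f+g)\le 1$; the difference between the two middle terms is $\tfrac{1}{4}(f^{*}-g^{*})(f-g)$, which has no definite sign, and central symmetry does not rescue it.

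What is true is only the volume inequality $|(S_{H}K)^{\circ}|\ge |K^{\circ}|$, and this is exactly where the real work of the classical proofs lies (Meyer--Pajor's fibrewise comparison of chord lengths over the correct projection, or Campi--Gronchi's convexity of $t\mapsto|K_{t}^{\circ}|^{-1}$ along shadow systems, Steiner symmetrization being the midpoint of such a system). A set-theoretic inclusion between the two symmetrized polars cannot be the mechanism, as the example above shows. The remainder of your outline --- volume preservation, preservation of central symmetry, convergence of a suitable sequence of symmetrizations to a ball, continuity of polarity, and $|rB|\,|(rB)^{\circ}|=|B|^{2}$ --- is standard and fine, but as written the proof has a genuine gap at its central step and would need the monotonicity lemma proved by one of the genuinely different arguments above.
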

    \begin{definition}
    For a convex body $K$ define the projection body of $K$  by its support function:  
    $$
    h_{\Pi K}(u) = |Pr_{u^{\perp}}K| \text{ for all $u\in S^{n-1}$.}
    $$     
    It is well-known that the definition is correct and it defines a convex body $\Pi K.$
\end{definition}

\begin{proposition}[Petty's inequality]
    Let $B$ be the unit ball in $\R^{n}$. Then for a convex body $K$ we have:
    $$
    |(\Pi K)^{\circ}| |K|^{n-1}\leq |(\Pi B)^{\circ}| |B|^{n-1}.
    $$
\end{proposition}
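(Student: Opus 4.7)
The plan is to recognize $\Phi(K) := |(\Pi K)^{\circ}|\,|K|^{n-1}$ as an affine invariant of $K$ and use Steiner symmetrization to reduce to the case $K = B$. First I would verify that $\Phi(TK)=\Phi(K)$ for every $T\in GL(n)$ (translations are even easier, since $\Pi K$ and $|K|$ are both translation invariant): the transformation law $\Pi(TK) = |\det T|\,(T^{\top})^{-1}\Pi K$ gives $(\Pi(TK))^{\circ} = |\det T|^{-1} T\,(\Pi K)^{\circ}$, whose $n$-volume carries a factor $|\det T|^{-(n-1)}$, cancelling exactly the $|\det T|^{n-1}$ coming from $|TK|^{n-1}$.

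Next, for each $u \in S^{n-1}$, let $S_u$ denote Steiner symmetrization with respect to $u^{\perp}$, so that $|S_u K| = |K|$. The heart of the argument is the geometric inclusion
\[
 \Pi(S_u K) \;\subseteq\; S_u(\Pi K).
\]
I would prove it by fixing an arbitrary $v \in S^{n-1}$ and writing $h_{\Pi(S_u K)}(v) = |Pr_{v^{\perp}}(S_u K)|$ as an integral of chords of $S_u K$ parallel to $u$, then comparing with $h_{S_u(\Pi K)}(v)$, which can be expressed via $h_{\Pi K}(v_{+})$ and $h_{\Pi K}(v_{-})$, where $v_{\pm}$ are the two reflections of $v$ through $u^{\perp}$. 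A one-dimensional rearrangement argument on the chord endpoints then yields the required pointwise bound between the support functions.

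Passing to polars reverses the inclusion and gives $|(\Pi(S_u K))^{\circ}|\geq |(S_u(\Pi K))^{\circ}|$. Since $\Pi K$ is centrally symmetric and centred at the origin, and Steiner symmetrization does not decrease the polar volume of such a body (the classical ingredient in the symmetrization proof of the Blaschke–Santaló inequality stated above), we get $|(S_u(\Pi K))^{\circ}|\geq |(\Pi K)^{\circ}|$. Chaining these with $|S_u K|=|K|$ yields $\Phi(S_u K) \geq \Phi(K)$. An appropriate (e.g.\ random) sequence of Steiner symmetrizations converges in the Hausdorff metric to a ball $B'$ with $|B'|=|K|$; continuity of $\Phi$ and the affine invariance from the first paragraph then give $\Phi(K)\leq \Phi(B')=\Phi(B)$, which is Petty's inequality.

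The main obstacle is the geometric inclusion $\Pi(S_u K) \subseteq S_u(\Pi K)$: every other step is either a routine scaling computation or a standard symmetrization fact, whereas this one demands a careful chord-by-chord rearrangement argument showing that Steiner symmetrization in direction $u$ shrinks projections onto hyperplanes oblique to $u$ (and leaves the projection onto $u^{\perp}$ itself unchanged) in the precise manner needed to make the remaining chain of inequalities work.
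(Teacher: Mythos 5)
The paper states Petty's projection inequality without proof, citing it as a classical result, so your attempt has to be measured against the standard literature argument (Petty, 1971; see Gardner, \emph{Geometric Tomography}, \S 9.3), which is indeed a Steiner-symmetrization proof. Your skeleton --- affine invariance of $\Phi(K)=|(\Pi K)^{\circ}|\,|K|^{n-1}$, a monotonicity step under symmetrization, convergence to a ball --- is the right one, and both the affine-invariance computation and the convergence step are fine.

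The problem is that the lemma you place at the heart of the argument, $\Pi(S_u K)\subseteq S_u(\Pi K)$, is false, so no chord-by-chord rearrangement will establish it. Counterexample in the plane (where $\Pi K$ is the difference body $K-K$ rotated by $\pi/2$): let $K$ be the triangle with vertices $(0,0),(1,0),(0,1)$ and $u=e_2$. Then $S_uK$ is the triangle with vertices $(1,0),(0,\pm\tfrac12)$, and for $v=(1,2)/\sqrt5$ one computes $h_{\Pi(S_uK)}(v)=|Pr_{v^{\perp}}S_uK|=\sqrt5/2$, while $S_u(\Pi K)$ is the hexagon with vertices $(\pm1,\pm\tfrac12),(0,\pm1)$, whose support function at $v$ equals $2/\sqrt5<\sqrt5/2$. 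What the rearrangement (shadow-system) argument actually yields is the weaker inclusion $\Pi(S_uK)\subseteq\tfrac12\bigl(\Pi K+R_u\Pi K\bigr)$, where $R_u$ is reflection in $u^{\perp}$; indeed, the quantity you describe, ``$h_{S_u(\Pi K)}(v)$ expressed via $h_{\Pi K}(v_{\pm})$'', is the support function of this Minkowski average, not of the Steiner symmetral, and the Minkowski average strictly contains the Steiner symmetral in general. Substituting it into your chain breaks the subsequent steps: Minkowski symmetrization neither preserves volume nor increases polar volume. The standard proof sidesteps all of this by proving the inclusion on the polar side directly, $S_u(\Pi^{\circ}K)\subseteq\Pi^{\circ}(S_uK)$; since Steiner symmetrization preserves volume this gives $|\Pi^{\circ}(S_uK)|\ge|S_u(\Pi^{\circ}K)|=|\Pi^{\circ}K|$ at once, with no appeal to Blaschke--Santal\'o-type monotonicity, and the rest of your outline then goes through. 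That polar inclusion is the genuinely hard step (a pointwise comparison of radial functions via the Cauchy projection formula), and your proposal does not supply it.
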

    We will also need some definitions from the geometry of numbers. 
    \begin{definition}
        $\Lambda\subset \R^{n}$ is a lattice if $\Lambda = A\mathbb{Z}^n$ for some $A\in GL(n,\mathbb{R})$. We also define $d(\Lambda) := |det A|.$
    \end{definition}

    \begin{definition}
        A lattice $\Lambda$ is called $K$-admissible if $ \Lambda \cap int K = \{0\}$.    
    \end{definition}

    \begin{definition}
        For a convex body $K$ its critical determinant is defined as  $$\Delta(K):=\min\{d(\Lambda)| \Lambda \text{ is } K\text{-admissible} \}$$ 
    \end{definition}

    \begin{definition}
        A lattice $\Lambda$ is called $K$-critical if $\Lambda$ is $K$-admissible and $d(\Lambda)=\Delta(K)$.     
    \end{definition}

    \begin{definition}
        Let $\Lambda$ be a lattice, then a lattice of translates of $K$ is defined as $$\Lambda + K = \{x+y| x\in \Lambda,y\in K\}.$$  
    \end{definition}
    
    \begin{definition}
        Let $\Lambda + K$ be a lattice of translates of $K$. Then its density is defined as $$D(\Lambda, K) := \frac{|K|}{d(\Lambda)}.$$      
    \end{definition}

    \begin{definition}
        We denote by $\delta_{L}(K)$ the density of the densest lattice packing in $\R^{n}$ by translates of $K$. It is easy to see that for centrally symmetric $K$ we have:
        $$
            \delta_{L}(K) = \frac{|K|}{2^{n}\Delta(K)}.
        $$
    \end{definition}

    \begin{definition}
        A lattice of translates of $K$ is non-separable if each affine $(n-1)$-subspace in $\R^{n}$ meets $x+K$ for some $x\in\Lambda$. 
    \end{definition}

    \begin{definition}
        For a convex body $K$ its *critical determinant is defined as  $$\Delta^{*}(K):=\max\{d(\Lambda)| \Lambda+K \text{ is non-separable} \}$$ 
    \end{definition}

    \begin{definition}
        A lattice $\Lambda$ is called $K$-*critical if $\Lambda + K$ is non-separable and $d(\Lambda)=\Delta^{*}(K)$.
    \end{definition}
    
    \begin{definition}
        $d_{n,n-1}(K) = \min\{d(\Lambda,K)| \Lambda+K \text{ is non-separable} \} = \frac{|K|}{\Delta^{*}(K)}$
    \end{definition}

    \begin{definition}
        Lattice width $\omega_{K}:\Z^{n}\setminus{0}\to \R$ is defined as 
        $$
        \omega_{K}(u)=\max_{x,y\in K}\langle u, x-y\rangle.
        $$
    \end{definition}

\end{section}
    
\begin{section}{Introduction}

     The Reinhardt Conjecture is an open long-standing problem about finding a centrally symmetric body $K\subset \R^{2}$ with the smallest possible value of $\delta_{L}(K)$. 
     Reinhardt conjectured that the unique solution up to an affine transformation is the smoothed octagon (an octagon rounded at corners by arcs of hyperbolas) and the conjectured minimum of $\delta_{L}(K)$ is $\frac{8-\sqrt{32}-\ln2}{\sqrt{8}-1}\approx 0.902414,$ while the best known estimation is $\delta_{L}(K)\geq  0.8926... $ by Tammela \cite{6}. 
     Comprehensive information about this conjecture can be found in \cite{5}.

    It is well known that the dual problem to the Reinhardt conjecture is the question about upper bounds for $d_{2,1}(K)$.  This problem was considered by Endre Makai Jr. in  \cite{0}, \cite{1}. He conjectured that $d_{2,1}(K)\leq\frac{\pi\sqrt{3}}{8}$ with equality only for ellipses. In his works Endre Makai Jr. obtained the following dual property and nearly accurate estimate.    
     \begin{ex}
        For the unit ball $B$ in $\R^{2}$ we have $d_{2,1}(B) = \frac{\pi\sqrt{3}}{8}$ and $\delta_{L}(B) = \frac{\pi}{2\sqrt{3}}\approx 0.90689$.
    \end{ex}
    
    \begin{ex}
        For a triangle $T$ we have $d_{2,1}(T) = \frac{3}{8}$. Let $T = (0,0)(1,1/2)(1/2,1).$ Then  $\mathbb{Z}^{2}$ is the *critical lattice.
    \end{ex}
    
    \begin{proposition}[Duality condition, \cite{1}]
    \label{prop2.1}
        For a convex body $K\subset\R^{n}$ we have:
        $$
            d_{n,n-1}(K) = \frac{|K||((K-K)/2)^{\circ}|}{4^{n}\delta_{L}(((K-K)/2)^{\circ})}.
        $$
    \end{proposition}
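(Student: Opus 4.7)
Set $C:=(K-K)/2$ and $L:=C^{\circ}$. Since $C$ (hence $L$) is centrally symmetric, the formulas already stated in Section~1, namely $d_{n,n-1}(K)=|K|/\Delta^{*}(K)$ and $\delta_{L}(L)=|L|/(2^{n}\Delta(L))$, reduce the claimed identity to the single duality
$$
\Delta^{*}(K)\cdot\Delta(L)=2^{n}.
$$
This is what I would prove.

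The heart of the argument is to recast non-separability as an admissibility condition on the dual lattice. The affine hyperplane $\{x:\langle u,x\rangle=t\}$ meets $y+K$ iff $t-\langle u,y\rangle$ lies in an interval of length $\omega_{K}(u)=2h_{C}(u)$, so $\Lambda+K$ is non-separable iff for every $u\neq 0$ the additive subgroup $G(u):=\{\langle u,x\rangle:x\in\Lambda\}\subset\R$ has every consecutive gap of length at most $2h_{C}(u)$. Now $G(u)$ is either dense in $\R$, in which case the condition is vacuous, or of the form $\alpha\Z$; the discrete case occurs precisely when $u=\alpha v$ for some primitive $v\in\Lambda^{*}$, and then the gap is $|\alpha|$. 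Using that $h_{C}$ is positively homogeneous and even (since $C=-C$), the inequality $|\alpha|\leq 2h_{C}(\alpha v)$ simplifies to $h_{C}(v)\geq 1/2$. Because $h_{C}$ is exactly the Minkowski gauge of $L=C^{\circ}$, this is equivalent to $\Lambda^{*}\cap\text{int}(\tfrac{1}{2}L)=\{0\}$, i.e.\ to $2\Lambda^{*}$ being $L$-admissible.

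The last step is the change of variable $\Lambda\mapsto 2\Lambda^{*}$, which is a bijection on the set of lattices with $d(\Lambda)\cdot d(2\Lambda^{*})=2^{n}$ (combining $d(\Lambda)d(\Lambda^{*})=1$ with $d(2\Lambda^{*})=2^{n}d(\Lambda^{*})$). Hence maximising $d(\Lambda)$ over lattices for which $\Lambda+K$ is non-separable is equivalent to minimising $d(\Lambda')$ over $L$-admissible lattices $\Lambda'$, which yields $\Delta^{*}(K)=2^{n}/\Delta(L)$ and therefore the proposition. The step I expect to be the most delicate is the gap-condition reformulation of non-separability: one must handle the dichotomy between the dense and discrete cases for $G(u)$ carefully and verify that the irrational directions contribute no constraint, so that only the countable family of rational directions — parametrised by the primitive vectors of $\Lambda^{*}$ — really has to be checked.
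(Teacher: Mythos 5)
Your proof is correct. The paper itself gives no proof of this proposition (it is cited from Makai's paper [1]), but your argument is exactly the standard route and coincides with the machinery the paper records later as Lemmas \ref{5.1}--\ref{5.4}ff.\ in Section 5: non-separability $\Leftrightarrow$ the width condition $\omega_K(u)=2h_{(K-K)/2}(u)\geq$ (gap of $\langle u,\Lambda\rangle$) $\Leftrightarrow$ admissibility of $2\Lambda^{*}$ for $((K-K)/2)^{\circ}$, followed by the determinant identity $d(\Lambda)\,d(2\Lambda^{*})=2^{n}$. Your handling of the dense/discrete dichotomy for $G(u)$ and the reduction to primitive dual vectors are both sound, so nothing is missing.
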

   
    \begin{proposition}
         \cite{0}
         For a convex body $K\subset\R^{2}$ we have $d_{2,1}(K) \leq 0.6910...$.
    \end{proposition}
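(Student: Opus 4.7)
The plan is to combine the duality condition of Proposition~\ref{prop2.1} with the Blaschke-Santaló inequality and an existing universal lower bound for the lattice packing density of centrally symmetric planar convex bodies. Set $L := (K-K)/2$, which is centrally symmetric with non-empty interior, so that $L^{\circ}$ is well-defined. Proposition~\ref{prop2.1} for $n=2$ gives
$$
d_{2,1}(K) \;=\; \frac{|K|\,|L^{\circ}|}{16\,\delta_{L}(L^{\circ})},
$$
so the strategy reduces to bounding the numerator from above and the denominator from below separately.

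For the numerator I would write $|K|\,|L^{\circ}| = \tfrac{|K|}{|L|}\cdot |L|\,|L^{\circ}|$. Blaschke-Santaló applied to the centrally symmetric body $L$ controls the second factor: $|L|\,|L^{\circ}| \le |B|^{2} = \pi^{2}$. The ratio $|K|/|L|$ is at most $1$ by Brunn-Minkowski, since
$$
|K-K|^{1/2} \;\ge\; |K|^{1/2} + |{-K}|^{1/2} \;=\; 2\,|K|^{1/2},
$$
so $|L| = |K-K|/4 \ge |K|$. Hence $|K|\,|L^{\circ}| \le \pi^{2}$.

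For the denominator, $L^{\circ}$ is centrally symmetric, so I may invoke Tammela's universal lower bound $\delta_{L}(L^{\circ}) \ge 0.8926\ldots$ from~\cite{6}. Putting the pieces together,
$$
d_{2,1}(K) \;\le\; \frac{\pi^{2}}{16\cdot 0.8926\ldots} \;\approx\; 0.6910\ldots,
$$
which is the claimed bound.

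The main obstacle in this scheme is that the estimate inherits all the slack of Tammela's inequality, which itself falls short of the conjectured Reinhardt minimum $\delta_{L} \approx 0.902414$. Consequently the bound $0.6910\ldots$ sits noticeably above the conjectured extremal value $\pi\sqrt{3}/8 \approx 0.6802$, and shrinking the gap by this route is equivalent to the hard part of the Reinhardt conjecture. This is precisely why the sharp estimate $\pi\sqrt{3}/8$ announced in the abstract cannot be obtained merely by substituting a stronger packing bound into the duality formula and will require a genuinely different argument.
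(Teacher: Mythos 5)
Your derivation is correct and is exactly the route the paper attributes to \cite{0}: the duality formula of Proposition~\ref{prop2.1} combined with Brunn--Minkowski, Blaschke--Santal\'o, and Tammela's bound $\delta_{L}\ge 0.8926\ldots$, yielding $\pi^{2}/(16\cdot 0.8926\ldots)\approx 0.6910$ (the paper carries out the same computation explicitly in the three-dimensional case). Your closing remark also matches the paper's own observation that this method cannot reach $\pi\sqrt{3}/8$ because the extremizer for $\delta_{L}$ is not an ellipse.
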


    \begin{conj}[Endre Makai Jr., \cite{0}] 
    \label{conj}
         $ \max\{ d_{2,1}(K) | K \subset \R^{2}\text{ is a convex body}\}= \frac{\pi\sqrt{3}}{8}= 0.68017...$, with equality possible only for ellipses. 
    \end{conj}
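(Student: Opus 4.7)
The plan is to exploit Proposition~\ref{prop2.1} and reduce the conjecture to a Mahler--Santaló-type inequality for centrally symmetric bodies. Writing $L=(K-K)/2$, which is centrally symmetric, Proposition~\ref{prop2.1} combined with $\delta_{L}(L^{\circ})=|L^{\circ}|/(4\Delta(L^{\circ}))$ gives
\[
d_{2,1}(K)=\frac{|K|\,\Delta(L^{\circ})}{4},
\]
so Conjecture~\ref{conj} is equivalent to the affine-invariant inequality $|K|\,\Delta(L^{\circ})\le\pi\sqrt3/2$. The Brunn--Minkowski inequality applied to $K+(-K)$ yields $|K-K|\ge 4|K|$, i.e.\ $|L|\ge|K|$, with equality iff $K$ is centrally symmetric. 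Since $L$ is already centrally symmetric and $(L-L)/2=L$, it is enough to prove
\[
|M|\,\Delta(M^{\circ})\;\le\;|B|\,\Delta(B)\;=\;\frac{\pi\sqrt3}{2}
\]
for every centrally symmetric convex $M\subset\R^{2}$, with equality iff $M$ is a centered ellipse; the equality case of the original conjecture then forces $|K|=|L|$ (so $K$ is centrally symmetric, a translate of $L$) together with $L$ being an ellipse, hence $K$ itself an ellipse.

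For the symmetric inequality I would use affine invariance of $|M|\Delta(M^{\circ})$ to normalise the critical lattice of $M^{\circ}$ to be $\Z^{2}$, so $\Delta(M^{\circ})=1$, and then by a unimodular change of basis place the three pairs of critical lattice vectors on $\partial M^{\circ}$ at $\pm e_{1},\pm e_{2},\pm(e_{1}+e_{2})$. Combined with Blaschke--Santaló $|M||M^{\circ}|\le\pi^{2}$, the target $|M|\le\pi\sqrt3/2$ is equivalent to the sharp lower bound
\[
|M^{\circ}|\;\ge\;\frac{2\pi}{\sqrt3}.
\]
The conjectured extremiser is the ellipse $M^{\circ}=\{x_{1}^{2}-x_{1}x_{2}+x_{2}^{2}\le1\}$, which has area exactly $2\pi/\sqrt3$ and for which $\Z^{2}$ is indeed a critical lattice; the dual body $M=(M^{\circ})^{\circ}$ is an ellipse of area $\pi\sqrt3/2$.

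The main obstacle is this sharp lower bound on $|M^{\circ}|$. A direct attack combining Blaschke--Santaló with any uniform lower bound on $\delta_{L}(M^{\circ})$ presently available (Tammela's $\ge0.8926$, or even the conjectural Reinhardt value $\approx 0.9024$) falls just short: it gives a product bounded by roughly $\pi^{2}/(4\cdot0.89)\approx 2.77$ in place of $\pi\sqrt3/2\approx 2.72$. The point is that the extremiser of Blaschke--Santaló (the ellipse) and the conjectural extremiser of Reinhardt's problem (the smoothed octagon) are genuinely different bodies, so these two inequalities cannot be simultaneously sharp. The plan would be to quantify this trade-off using the full critical-lattice hypothesis --- not merely $\Z^{2}$-admissibility but also the absence of any admissible sublattice of smaller determinant --- together with the fixed hexagonal configuration of critical vectors on $\partial M^{\circ}$, and a stability version of Blaschke--Santaló. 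A variational/symmetrisation argument should then show that any deviation of $M^{\circ}$ from the extremal ellipse simultaneously decreases the Mahler product and leaves room for a strictly smaller admissible lattice, which closes the remaining gap and identifies the ellipse as the unique extremiser.
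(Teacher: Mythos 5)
Your reduction to the centrally symmetric case via Proposition~\ref{prop2.1} and Brunn--Minkowski is sound and parallels what the paper does with Lemmas~\ref{3.1} and~\ref{5.3}, and your reformulation $d_{2,1}(M)=\tfrac14|M|\Delta(M^{\circ})$ is correct. The gap is in the core of the argument. After normalising $\Delta(M^{\circ})=1$ you propose to finish by proving the ``sharp lower bound'' $|M^{\circ}|\ge 2\pi/\sqrt{3}$ and then applying Blaschke--Santal\'o. But with $\Delta(M^{\circ})=1$ that lower bound is exactly the statement $\delta_{L}(M^{\circ})=|M^{\circ}|/(4\Delta(M^{\circ}))\ge\pi/(2\sqrt3)\approx 0.90689$, i.e.\ that every centrally symmetric planar body packs at least as densely as the disk. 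This is \emph{false}: the smoothed octagon has $\delta_{L}\approx 0.902414<\pi/(2\sqrt3)$, so normalised to critical determinant $1$ its area is below $2\pi/\sqrt3$. Hence the route ``lower-bound $|M^{\circ}|$, then Blaschke--Santal\'o on $|M||M^{\circ}|$'' cannot close no matter how the lower bound is obtained. You half-recognise this (the extremisers of the two inequalities differ), but your fallback --- a stability version of Blaschke--Santal\'o plus an unspecified variational/symmetrisation argument coupling the Mahler product to the critical determinant --- is a research programme, not a proof; no mechanism is given for why a deviation from the ellipse loses more in the Mahler product than it can gain in $\Delta(M^{\circ})$.

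The missing idea, which is the actual content of the paper, is to replace the (false) global bound on $\delta_{L}$ by a different coupling: normalise so that the \emph{minimal central triangle} inscribed in $K^{\circ}$ has area $3/2$. On one side this triangle directly yields a non-separable lattice for $K$ of determinant $\tfrac{8}{9}|T|$ (Lemma~\ref{4.2}), with no reference to $\Delta(K^{\circ})$ at all. On the other side, the same normalisation forces the pointwise chord estimate $l(x)+r(x)\ge 1/h(x-\pi/2)$ for chords of $K^{\circ}$ tangent to the rotated copy of $K$, and integrating via Proposition~\ref{2.3} gives the new inequality $\tfrac34|K^{\circ}|\ge|K|$ (Lemma~\ref{3.3}); only then does Blaschke--Santal\'o enter, applied to $|K||K^{\circ}|$ rather than to a lower bound on $|K^{\circ}|$ alone, yielding $|K|^{2}\le\tfrac34|K||K^{\circ}|\le\tfrac34\pi^{2}$. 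That substitute inequality is what bridges the numerical gap you correctly identified, and nothing in your proposal supplies an equivalent of it.
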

    
    From Proposition \ref{prop2.1} we get the idea  of using estimation of $\delta_{L}(K)$ to get upper bounds for $d_{2,1}(K)$. But unfortunately, accurate estimates in $\R^{2}$ cannot be obtained in that way and the main reason is that the optimal body for $\delta_{L}$ optimisation is not an ellipse. 
    Nevertheless, the estimate obtained in \cite{0} is very close to the conjectured value.  

    This paper will provides a proof of Conjecture  \ref{conj}. 
    \begin{theorem}
        \label{1.1}
        $ \max\{ d_{2,1}(K) | K \subset \R^{2}\text{ is a convex body}\}= \frac{\pi\sqrt{3}}{8}= 0.68017...$. In addition, if $d_{2,1}(K)=\frac{\pi\sqrt{3}}{8}$, then $K$ is an ellipse.   
    \end{theorem}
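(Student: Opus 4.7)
The starting point is Proposition \ref{prop2.1}. Setting $M = (K-K)/2$ and using $\delta_L(M^\circ) = |M^\circ|/(4\Delta(M^\circ))$ (which holds since $M^\circ$ is centrally symmetric in $\R^2$), the duality condition simplifies to
\begin{equation*}
d_{2,1}(K) \;=\; \frac{|K|\,\Delta(M^\circ)}{4}.
\end{equation*}
The theorem is therefore equivalent to the affine-invariant inequality $|K|\,\Delta(M^\circ)\le \pi\sqrt{3}/2$, with equality only for ellipses. A direct check on the unit disk $B$ (where $M = B$ and $\Delta(B) = \sqrt{3}/2$) confirms $|B|\Delta(B) = \pi\sqrt{3}/2$.

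The ellipse extremum strongly suggests Petty's inequality (Proposition 1.2). In $\R^{2}$, $\Pi K = K-K = 2M$, so $|(\Pi K)^\circ| = |M^\circ|/4$, and Petty yields the sharp bound $|K|\,|M^\circ| \le \pi^{2}$, with equality iff $K$ is an ellipse. In the centrally symmetric case this reduces to Blaschke-Santaló on $K = M$; in general it is the correct affine-invariant strengthening that keeps $|K|$ (rather than $|M|$) on the left. Combined with the classical Minkowski inscribed-hexagon bound $\Delta(M^\circ) \le |M^\circ|/3$ (the affinely regular hexagon spanned by the three pairs of primitive boundary vectors of the critical lattice has area $3\Delta(M^\circ)$ and lies in $M^\circ$), one obtains the preliminary estimate $|K|\Delta(M^\circ) \le \pi^{2}/3$, which is still strictly weaker than the target $\pi\sqrt{3}/2$.

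The main obstacle is that the two factors $|K|$ and $\Delta(M^\circ)$ cannot be bounded independently in a sharp way. Indeed, the strengthened estimate $\Delta(M^\circ)/|M^\circ| \le \sqrt{3}/(2\pi)$ would be equivalent to $\delta_L(M^\circ) \ge \pi/(2\sqrt{3})$ (the ellipse packing density), but this fails in general, for example for the smoothed octagon of Reinhardt. Any proof must therefore perform a joint analysis in which the Petty slack for non-ellipses exactly compensates the packing-density deficit. I would approach this through the structure of an $M^\circ$-critical lattice $\Lambda$: by Minkowski's theorem on extremal lattices in $\R^{2}$, there exist primitive boundary pairs $\pm\lambda_1,\pm\lambda_2,\pm\lambda_3 = \pm(\lambda_1+\lambda_2)$ on $\partial M^\circ$ with $h_M(\lambda_i) = 1$, giving three directions of $K$-width exactly $2$. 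This simultaneously inscribes $K$ in a circumscribing hexagon of area $3/\Delta(M^\circ)$ and places an inscribed hexagon of area $3\Delta(M^\circ)$ inside $M^\circ$. The technical heart of the argument is to couple these two dual hexagonal inclusions with Petty's inequality into a single sharp estimate in which all slacks vanish simultaneously, forcing equality precisely at the ellipse; carrying this out is what I expect to be the most delicate part of the proof.
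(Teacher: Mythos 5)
Your setup is correct and well motivated: the reduction of the theorem to the affine-invariant inequality $|K|\,\Delta\bigl(((K-K)/2)^{\circ}\bigr)\le \pi\sqrt{3}/2$ is right, your diagnosis that Petty (or Blaschke--Santal\'{o}) and the Minkowski hexagon bound $\Delta(C)\le |C|/3$ cannot simply be multiplied together is exactly the correct obstruction (the sharp per-factor bound on $\Delta$ would amount to the false claim $\delta_{L}\ge \pi/(2\sqrt{3})$ for all symmetric planar bodies), and the objects you point to --- the critical-lattice hexagon in $M^{\circ}$ and the polar circumscribed triangle/hexagon about $K$ --- are the ones the paper actually uses. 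But the proposal stops precisely where the proof has to begin: the sentence about coupling ``these two dual hexagonal inclusions with Petty's inequality into a single sharp estimate'' names the goal without supplying any mechanism for the coupling. That coupling \emph{is} the theorem; everything before it is standard bookkeeping, so this is a genuine gap rather than an alternative route.

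The missing idea, in the paper's formulation (after first reducing to centrally symmetric $K$ via Lemma \ref{3.1}, which lets one use Blaschke--Santal\'{o} in place of Petty), is Lemma \ref{3.3}. Normalize so that the minimal-area central triangle inscribed in $K^{\circ}$ equals $\frac{3}{2}$; this forces every inscribed affinely regular hexagon of $K^{\circ}$ to contain the $\frac{\pi}{2}$-rotated copy $iK$ of $K$, hence $iK\subset K^{\circ}$, and moreover the tangent chord of $K^{\circ}$ at the boundary point $\gamma(x)$ of $iK$ splits into pieces with $l(x)+r(x)\ge 1/h(x-\pi/2)$. The area identity of Proposition \ref{2.3} converts this pointwise chord estimate into
\begin{equation*}
2\bigl(|K^{\circ}|-|K|\bigr)=\frac{1}{2}\int_{0}^{2\pi}(l^{2}+r^{2})\ \ge\ \frac{1}{4}\int_{0}^{2\pi}(l+r)^{2}\ \ge\ \frac{1}{2}|K^{\circ}|,
\end{equation*}
i.e.\ $|K|\le\frac{3}{4}|K^{\circ}|$, and then Blaschke--Santal\'{o} gives $|K|^{2}\le\frac{3}{4}|K||K^{\circ}|\le\frac{3}{4}\pi^{2}$, which is exactly the bound $|K|\le\frac{\sqrt{3}\pi}{2}$ your normalization requires. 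The essential extra input, absent from your sketch, is that the \emph{minimality} of the inscribed central triangle yields a lower bound on every tangent chord, which is then integrated along the boundary; Petty plus the two hexagonal inclusions alone cannot produce it. You would also still need to argue the equality case separately (strictness for non-symmetric $K$ via Brunn--Minkowski as in Lemma \ref{5.3}, and the critical-lattice structure theorem, Lemma \ref{5.4}, to force equality in Blaschke--Santal\'{o} and hence an ellipse).
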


    \begin{Corollary} 
    \label{c2.1}
        For each centrally symmetric convex body $K\subset \R^{2}$ we have $\delta_{L}(K)\geq \frac{1}{2\pi\sqrt{3}}|K||K^{\circ}|$, with equality if and only if $K$ is an ellipse.  
    \end{Corollary}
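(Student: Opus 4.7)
The corollary is, up to unpacking definitions, a direct rewriting of Theorem \ref{1.1} through the duality formula of Proposition \ref{prop2.1}. The plan is to apply that formula to $K^\circ$ in place of $K$, exploiting central symmetry to collapse $(K^\circ-K^\circ)/2$ to $K^\circ$ itself. The point is that Theorem \ref{1.1} must be used on the polar body, not on $K$.

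Since $K$ is centrally symmetric with $0 \in \text{int}\,K$, the polar $K^\circ$ is again a centrally symmetric convex body with $0$ in its interior. For any centrally symmetric convex body $L$ one has $L-L = 2L$, hence $(L-L)/2 = L$. Applying Proposition \ref{prop2.1} to $L = K^\circ$ in dimension $n=2$ and using the bipolar identity $K^{\circ\circ} = K$, I would obtain
$$
d_{2,1}(K^\circ) \;=\; \frac{|K^\circ|\,|(K^\circ)^\circ|}{16\,\delta_L((K^\circ)^\circ)} \;=\; \frac{|K|\,|K^\circ|}{16\,\delta_L(K)}.
$$
Theorem \ref{1.1} applied to $K^\circ$ gives $d_{2,1}(K^\circ) \le \tfrac{\pi\sqrt{3}}{8}$, and rearranging the displayed identity yields $\delta_L(K) \ge \tfrac{1}{2\pi\sqrt{3}}\,|K|\,|K^\circ|$, which is the desired inequality.

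For the rigidity statement, equality in the corollary forces $d_{2,1}(K^\circ) = \tfrac{\pi\sqrt{3}}{8}$, so the equality clause of Theorem \ref{1.1} forces $K^\circ$ to be an ellipse; polarising back, $K$ is itself an ellipse. Conversely, if $K$ is an ellipse, then $|K|\,|K^\circ| = |B|^2$ by a direct computation (equivalently, equality in Blaschke--Santaló) and $\delta_L(K) = \delta_L(B) = \tfrac{\pi}{2\sqrt{3}}$ by the affine invariance of $\delta_L$, so Example 2.1 confirms that equality is attained. There is no genuine obstacle here beyond the observation that Theorem \ref{1.1} should be fed $K^\circ$ rather than $K$; everything else is bookkeeping.
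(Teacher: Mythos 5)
Your proof is correct and is exactly the derivation the paper intends (the Corollary is stated without proof, as an immediate consequence of Theorem \ref{1.1} via the duality formula of Proposition \ref{prop2.1} applied to the centrally symmetric body $K^{\circ}$). The arithmetic checks out, and you rightly supply the converse direction of the equality case from Example 2.1 plus affine invariance, which Theorem \ref{1.1} alone does not give.
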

    The Ulam's packing conjecture states that for a centrally symmetric convex body $K\subset \R^{3}$ we have $\delta_{L}(K) \geq \frac{\pi}{\sqrt{18}}$. If this statement holds, then by using duality we could obtain the sharp estimate $d_{3,2}(K) \leq \frac{\pi}{6\sqrt{2}},$ with equality only for ellipsoids. Nevertheless, the best known estimate for a packing constant in $\R^{3}$ is  $\delta_{L}(K) \geq 0.53835...$ by E.H. Smith \cite{7}.
     Therefore we can use duality to get the estimate:
     $$
     d_{3,2}(K) =  \frac{|K||K^{\circ}|}{64\delta_{L}(K^{\circ})}\leq \frac{(\frac{4\pi}{3})^{2}}{64\cdot 0.53835...} = 0.509251...
     $$

    This paper provides an idea of using projection bodies to improve $d_{3,2}$ estimates without using known bounds for $\delta_{L}$. We prove the following theorem. 

    \begin{theorem}
    \label{th2.2}
        For a convex body $K\subset\R^{3}$ we have $d_{3,2}(K)\leq \frac{\pi}{4\sqrt{3}} = 0.453449...$.
    \end{theorem}

    \begin{ex}
        For the unit ball $B\subset\R^{3}$ we have $d_{3,2}(B)=\frac{\pi}{6\sqrt{2}} = 0.37024...$.
    \end{ex}
\end{section}

\begin{section}{Сalculations}
    
    \begin{notation}
        In this section we assume that $K\subset \mathbb{R}^2$ is strongly convex and has $C^{\infty}$ boundary. 
    \end{notation}

    \begin{notation}
        The support function of $K$ $h:S^{1}\to\mathbb{R}$ is defined as $h(\theta) = \max_{x\in K}\langle (\cos\theta, \sin\theta),x\rangle$.
    \end{notation}

    \begin{notation}
        Let $a = (a_{1},a_{2}), b = (b_{1},b_{2})$. Then we denote 
        $$
        det(a,b):=
        det\left(
                \begin{array}{cc}
                   a_{1} &  a_{2}\\
                   b_{1} &  b_{2}
                \end{array}
            \right)  
        $$
    \end{notation}
    \noindent The following statements are well-known.
    \begin{proposition}
        Parameterization in terms of the support function: consider  
        $$
        \gamma(x) = 
            \left(
                \begin{array}{cc}
                    cos(x) & -sin(x)\\
                    sin(x) & cos(x)
                \end{array}
            \right)
            \left(
                \begin{array}{c}
                    h(x)  \\
                    h'(x)  
                \end{array}
            \right).
        $$
        Then
        \begin{itemize}
            \item $\gamma(x)\in K$
            \item $\langle (\cos x, \sin x),\gamma(x)\rangle= \max_{y\in K}\langle (\cos x, \sin x),y\rangle$
            \item $\gamma'(x) = (h(x)+h''(x))(-sin(x),cos(x))$
            \item $det(\gamma(x),\gamma'(x)) = h(x)(h(x)+h''(x))$
            \item $det(\gamma'(x),\gamma''(x)) = (h(x)+h''(x))^2$
        \end{itemize}
    \end{proposition}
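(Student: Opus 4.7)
The plan is to work everything out in the rotating orthonormal frame $e_r(x) = (\cos x, \sin x)$, $e_\theta(x) = (-\sin x, \cos x)$, in which the parametrization reads simply $\gamma(x) = h(x)\, e_r(x) + h'(x)\, e_\theta(x)$. The only identities I will repeatedly use are $e_r' = e_\theta$, $e_\theta' = -e_r$, $\langle e_r, e_\theta\rangle = 0$, and $\det(e_r, e_\theta) = 1$. Once the frame is set up, the five bullets fall out essentially by bookkeeping.

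The second bullet is immediate: $\langle e_r(x), \gamma(x)\rangle = h(x)\langle e_r, e_r\rangle + h'(x)\langle e_r, e_\theta\rangle = h(x)$, which by definition of the support function equals $\max_{y\in K}\langle e_r(x), y\rangle$. The first bullet, that $\gamma(x) \in K$, is the only step that is not pure calculation. Because $K$ is strongly convex with $C^{\infty}$ boundary, the supporting line with outer normal $e_r(x)$ meets $\partial K$ in a unique point $p(x)$. Writing $p(x) = h(x) e_r(x) + \lambda(x) e_\theta(x)$ (forced by $\langle e_r, p\rangle = h$), I differentiate this scalar identity in $x$, note that $p'(x)$ is tangent to $\partial K$ at $p(x)$ and hence orthogonal to $e_r(x)$, and read off $\lambda(x) = h'(x)$. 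Therefore $p(x) = \gamma(x)$, and in particular $\gamma(x) \in \partial K \subset K$.

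The last three bullets are plain differentiation in the frame. Differentiating $\gamma = h\, e_r + h'\, e_\theta$ gives $\gamma' = h'\, e_r + h\, e_\theta + h''\, e_\theta - h'\, e_r = (h + h'')\, e_\theta$, which is the third bullet. Then $\det(\gamma, \gamma') = \det\bigl(h\, e_r + h'\, e_\theta,\; (h+h'')\, e_\theta\bigr) = h(h+h'')\det(e_r, e_\theta) = h(h+h'')$, the fourth bullet. Differentiating once more, $\gamma'' = (h' + h''')\, e_\theta - (h + h'')\, e_r$, and a single determinant computation gives $\det(\gamma', \gamma'') = \det\bigl((h+h'')\, e_\theta,\; (h'+h''')\, e_\theta - (h+h'')\, e_r\bigr) = -(h+h'')^2 \det(e_\theta, e_r) = (h+h'')^2$, since $\det(e_\theta, e_r) = -1$.

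The only genuinely geometric step is the identification of $\gamma(x)$ with the boundary point of $K$ whose outer unit normal is $e_r(x)$; the rest is a mechanical application of the two Frenet-type relations $e_r' = e_\theta$, $e_\theta' = -e_r$ for the rotating frame, together with bilinearity of the determinant. I expect no obstacle beyond keeping signs straight in the final $\det(e_\theta, e_r)$ step.
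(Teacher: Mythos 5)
Your proof is correct. The paper states this proposition without any proof at all (it is labelled ``well-known''), so there is nothing to compare against; your rotating-frame argument with $e_r' = e_\theta$, $e_\theta' = -e_r$ is the standard derivation, all five bullets check out (including the sign in $\det(e_\theta, e_r) = -1$), and the identification of $\gamma(x)$ with the unique boundary point having outer normal $e_r(x)$ correctly uses the strong convexity and smoothness assumptions declared at the start of that section.
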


    \begin{proposition} We list some properties of the support function:  
        \begin{itemize}
            \item $h+h''\geq 0$ 
            \item $|K| = \frac{1}{2} \int_{0}^{2\pi} h^2 + hh'' = \frac{1}{2} \int_{0}^{2\pi} h^2 - h'^2$
            \item $|K^{\circ}| =  \frac{1}{2} \int_{0}^{2\pi} \frac{1}{h^2}$
        \end{itemize}
    \end{proposition}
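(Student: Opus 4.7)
The plan is to derive all three bullets from the parameterization $\gamma(x)$ together with the identities in the preceding proposition, Green's theorem, integration by parts, and the standard polar-coordinate area formula. The first claim is a convexity/curvature statement, the second is a direct application of Green's theorem, and the third reduces to expressing $K^{\circ}$ via its radial function.

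For $h+h''\ge 0$, I would argue that $h(x)+h''(x)$ is the radius of curvature of $\partial K$ at the point with outer unit normal $(\cos x,\sin x)$. Indeed, $|\gamma'(x)|=|h+h''|$ and the formula $\det(\gamma'(x),\gamma''(x))=(h+h'')^{2}$ already recorded in the preceding proposition gives signed curvature $1/(h+h'')$ at points where $h+h''>0$. Since $K$ is strongly convex with $C^{\infty}$ boundary, $\gamma$ traces $\partial K$ exactly once as a positively oriented Jordan curve, so the signed curvature must stay non-negative, forcing $h+h''\ge 0$ everywhere.

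For $|K|=\tfrac12\int_{0}^{2\pi}(h^{2}+hh'')\,dx$, Green's theorem applied to $\gamma$ yields $|K|=\tfrac12\int_{0}^{2\pi}\det(\gamma(x),\gamma'(x))\,dx$, and substituting the identity $\det(\gamma,\gamma')=h(h+h'')$ from the previous proposition produces the first equality. The second equality follows from integrating $hh''$ by parts on $[0,2\pi]$: periodicity of $h$ and $h'$ kills the boundary terms, leaving $\int_{0}^{2\pi}hh''\,dx=-\int_{0}^{2\pi}(h')^{2}\,dx$.

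For $|K^{\circ}|=\tfrac12\int_{0}^{2\pi}\frac{1}{h^{2}}\,d\theta$, I would compute the radial function of $K^{\circ}$. The containment $r(\cos\theta,\sin\theta)\in K^{\circ}$ is equivalent to $r\langle(\cos\theta,\sin\theta),y\rangle\le 1$ for all $y\in K$, i.e.\ $r\cdot h(\theta)\le 1$, so $\rho_{K^{\circ}}(\theta)=1/h(\theta)$ (using $0\in\mathrm{int}\,K$ so that $h>0$). Plugging into the polar-area formula $|K^{\circ}|=\tfrac12\int_{0}^{2\pi}\rho_{K^{\circ}}(\theta)^{2}\,d\theta$ finishes the proof. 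No step presents a real obstacle; the only bookkeeping is the orientation convention in Green's theorem and the fact that $\gamma$ parameterizes $\partial K$ exactly once, both of which are secured by the standing $C^{\infty}$ strongly convex hypothesis.
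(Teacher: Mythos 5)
The paper states this proposition without proof (it is introduced as ``well-known''), so there is no argument of the paper to compare against. Your treatment of the second and third bullets is correct and is the standard one: Green's theorem plus the identity $\det(\gamma,\gamma')=h(h+h'')$, integration by parts with periodicity killing the boundary terms, and the radial function $\rho_{K^{\circ}}=1/h$ combined with the polar-coordinate area formula.

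The one step that does not hold up as written is the deduction of $h+h''\ge 0$ from non-negativity of the signed curvature. From the very identities you quote, the signed curvature of $\gamma$ is $\det(\gamma',\gamma'')/|\gamma'|^{3}=(h+h'')^{2}/|h+h''|^{3}=1/|h+h''|$, which is positive \emph{whatever} the sign of $h+h''$; so ``signed curvature $\ge 0$'' carries no information about that sign. What actually forces the conclusion is the direction of the tangent vector: $\gamma'(x)=(h+h'')(-\sin x,\cos x)$, and for $\gamma$ to traverse $\partial K$ once in the positive direction the coefficient of the counterclockwise unit tangent $(-\sin x,\cos x)$ must be non-negative. You do invoke the single positively oriented traversal, so the needed ingredient is present, but it must be applied to $\gamma'$ rather than to the curvature. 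A cheaper and self-contained alternative: fix $x$ and set $f(y)=h(y)-\langle(\cos y,\sin y),\gamma(x)\rangle=h(y)-h(x)\cos(y-x)-h'(x)\sin(y-x)$. Since $\gamma(x)\in K$ one has $f\ge 0$, while $f(x)=f'(x)=0$, so $f''(x)=h(x)+h''(x)\ge 0$. With that repair the proposition is fully proved.
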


    \begin{proposition}
    \label{2.3}
        Let $\Gamma(t)=\gamma(t)+l(t)\frac{\gamma'(t)}{|\gamma'(t)|}$, where $\gamma:\mathbb{R}\to\mathbb{R}^{2}$ and $l:\R\to\R$ are smooth periodic functions with period $2\pi$. We also assume that $|\gamma'|>0$.
        Then 
        $$
            \frac{1}{2}\int_{0}^{2\pi} det(\Gamma, \Gamma') - \frac{1}{2}\int_{0}^{2\pi} det (\gamma, \gamma') = \frac{1}{2}\int_{0}^{2\pi}\frac{l^2}{|\gamma'|^2}det(\gamma',\gamma'').
        $$
    \end{proposition}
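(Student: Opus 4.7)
The plan is to expand $\det(\Gamma,\Gamma')$ by multilinearity, identify the three surviving kinds of terms (the base term, a total derivative, and the curvature term), and then integrate.

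First, I would introduce the shorthand $T(t)=\gamma'(t)/|\gamma'(t)|$, so that $\Gamma=\gamma+lT$ and $\Gamma'=\gamma'+l'T+lT'$. Multilinearity gives
\[
\det(\Gamma,\Gamma')=\det(\gamma,\gamma')+l'\det(\gamma,T)+l\det(\gamma,T')+l\det(T,\gamma')+l^2\det(T,T'),
\]
where I have already dropped $ll'\det(T,T)=0$. The term $\det(T,\gamma')=\det(\gamma'/|\gamma'|,\gamma')$ also vanishes, leaving four terms.

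Next I would compute $\det(T,T')$. Differentiating $T=\gamma'/|\gamma'|$ and using $\det(\gamma',\gamma')=0$ kills the radial correction in $T'$, so
\[
\det(T,T')=\frac{1}{|\gamma'|^2}\det(\gamma',\gamma''),
\]
which reproduces the right-hand side of the claimed identity once multiplied by $l^2$ and integrated.

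The key observation that handles the two remaining cross terms is that $\det(\gamma',T)=0$, hence
\[
\frac{d}{dt}\bigl[l\det(\gamma,T)\bigr]=l'\det(\gamma,T)+l\det(\gamma',T)+l\det(\gamma,T')=l'\det(\gamma,T)+l\det(\gamma,T').
\]
Since $\gamma$ and $l$ are $2\pi$-periodic and smooth, so is $l\det(\gamma,T)$, and its integral over $[0,2\pi]$ is zero. Integrating the expansion and dividing by $2$ then yields exactly the stated formula.

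The computation is entirely mechanical; the only slightly subtle point — and the step one is most likely to miss — is recognizing that the two cross terms $l'\det(\gamma,T)+l\det(\gamma,T')$ form an exact derivative, so that no boundary contribution survives and only the tangential-normal term $l^2\det(T,T')$ remains.
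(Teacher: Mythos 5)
Your proof is correct and follows essentially the same route as the paper: expand $\det(\Gamma,\Gamma')$ by multilinearity, observe that the terms proportional to $\det(T,\gamma')$ vanish, reduce $l^2\det(T,T')$ to $\frac{l^2}{|\gamma'|^2}\det(\gamma',\gamma'')$, and kill the cross terms by periodicity. The paper phrases that last step as an integration by parts of $\int\det(\gamma,(lT)')$ followed by noting $\det(\gamma',lT)=0$, whereas you package the same cancellation as an exact derivative $\frac{d}{dt}[l\det(\gamma,T)]$; these are the same argument.
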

    
    \begin{proof}
        $$
            \begin{aligned}
                  \int_{0}^{2\pi} det(\Gamma,\Gamma') = \int_{0}^{2\pi} det(\gamma(t)+l(t)\frac{\gamma'(t)}{|\gamma'(t)|},\gamma'(t)+\left(l(t)\frac{\gamma'(t)}{|\gamma'(t)|}\right) ')  \\
                  = \int_{0}^{2\pi} det(\gamma,\gamma') + \int_{0}^{2\pi} det(\gamma,\left(l(t)\frac{\gamma'(t)}{|\gamma'(t)|}\right)') + \int_{0}^{2\pi} det(l(t)\frac{\gamma'(t)}{|\gamma'(t)|},\left(l(t)\frac{\gamma'(t)}{|\gamma'(t)|}\right)' )\\
                  = \int_{0}^{2\pi} det(\gamma,\gamma') - \int_{0}^{2\pi} det(\gamma',l(t)\frac{\gamma'(t)}{|\gamma'(t)|}) + \int_{0}^{2\pi}\frac{l^2}{|\gamma'|^2}det(\gamma',\gamma'') \\
                  = \int_{0}^{2\pi} det(\gamma,\gamma') +  \int_{0}^{2\pi}\frac{l^2}{|\gamma'|^2}det(\gamma',\gamma'')
            \end{aligned}
        $$
    \end{proof}

\end{section}

\begin{section}{ Proof of the Theorem 2.1 (inequality part)}

    \begin{lemma}\cite{1}
    \label{3.1}        
        $$
            \begin{aligned}
                \max\{ d_{2,1}(K) | K \subset \R^{2}
                \text{ is a convex body}\} = \\
                \max\{d_{2,1}(K) | K \subset \R^{2}
                \text{is a centrally symmetric convex body}\}
            \end{aligned}
        $$
    \end{lemma}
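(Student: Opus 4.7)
The plan is to exhibit, for any convex body $K \subset \R^2$, a centrally symmetric convex body $K'$ with $d_{2,1}(K') \geq d_{2,1}(K)$. The natural candidate is the (rescaled) difference body $K' := (K-K)/2$, which is automatically centrally symmetric.

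The first key step is to show that $K$ and $K'$ share the same $*$-critical determinant, i.e.\ $\Delta^*(K) = \Delta^*(K')$. The reason I expect this to work is that non-separability is a purely directional condition: given a unit vector $u$, the line perpendicular to $u$ at signed height $t$ meets some translate $x + K$ (with $x\in\Lambda$) precisely when $t$ lies in $\langle \Lambda,u\rangle + [-h_K(-u),h_K(u)]$, and the length of this interval is exactly the width $h_K(u)+h_K(-u)$ of $K$ in direction $u$. Since the support function of $(K-K)/2$ is $\tfrac12(h_K(u)+h_K(-u))$, the body $K'$ has the \emph{same} width in every direction as $K$. Consequently $\Lambda + K$ is non-separable if and only if $\Lambda + K'$ is non-separable, which gives $\Delta^*(K) = \Delta^*(K')$.

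The second step is the volume comparison $|K'| \geq |K|$. This follows from Brunn–Minkowski: $|K + (-K)|^{1/2} \geq |K|^{1/2} + |-K|^{1/2} = 2|K|^{1/2}$, so $|K-K| \geq 4|K|$ and hence $|(K-K)/2| \geq |K|$. (Equality characterises central symmetry, but that is not needed here.)

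Combining the two steps,
\[
  d_{2,1}(K') \;=\; \frac{|K'|}{\Delta^*(K')} \;=\; \frac{|K'|}{\Delta^*(K)} \;\geq\; \frac{|K|}{\Delta^*(K)} \;=\; d_{2,1}(K),
\]
so the supremum of $d_{2,1}$ over all convex bodies equals the supremum over centrally symmetric ones, and the reverse inequality is trivial. I do not anticipate a serious obstacle: the whole argument rests on the observation that non-separability depends only on directional widths, which is immediate once the geometric picture is set up correctly; the only mild subtlety is making sure one handles the case when $\langle \Lambda,u\rangle$ is dense in $\R$ (in which case the condition is automatic) separately from the case when it is a discrete subgroup of $\R$ (where the width determines the inequality).
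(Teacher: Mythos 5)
Your argument is correct and is essentially the approach the paper relies on: the paper itself only cites \cite{1} for Lemma \ref{3.1}, but the identical reasoning (equality of width functions $\omega_K\equiv\omega_{(K-K)/2}$ implying that $\Lambda+K$ and $\Lambda+(K-K)/2$ are non-separable for the same lattices, combined with Brunn--Minkowski for $|(K-K)/2|\ge|K|$) appears verbatim as the proof of Lemma \ref{5.3}. Your explicit verification of the width criterion, including the dense-versus-discrete dichotomy for $\langle\Lambda,u\rangle$, is a sound self-contained substitute for the paper's appeal to Lemma \ref{5.1}.
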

    \noindent Therefore, we can assume that $K$ is centrally-symmetric.

    \begin{notation}
        A triangle is a central triangle if its barycenter is in the origin. 
    \end{notation}

    \begin{lemma}
    \label{4.2}
        Let $K$ be a centrally symmetric convex body and suppose that there exists a central triangle $T^{\circ}$ whose vertices belong to $\partial K^{\circ}$, such that $|K||T^{\circ}| \leq \frac{3\sqrt{3}}{4}\pi$. Then $d_{2,1}(K)\leq \frac{\sqrt{3}\pi}{8}.$    
    \end{lemma}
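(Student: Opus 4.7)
My plan is to use Proposition~\ref{prop2.1}: for centrally symmetric $K$ one has $(K-K)/2 = K$ and $\delta_L(K^{\circ}) = |K^{\circ}|/(4\Delta(K^{\circ}))$, so the proposition reduces to $d_{2,1}(K) = |K|\Delta(K^{\circ})/4$; thus it suffices to show $\Delta(K^{\circ}) \le \tfrac{2}{3}|T^{\circ}|$. Let $v_1, v_2, v_3 \in \partial K^{\circ}$ denote the vertices of $T^{\circ}$, so $v_1 + v_2 + v_3 = 0$, and set $\Lambda := \mathbb{Z} v_1 + \mathbb{Z} v_2$. The shoelace formula with $v_3 = -v_1 - v_2$ gives $|T^{\circ}| = \tfrac{3}{2}|\det(v_1, v_2)| = \tfrac{3}{2}\,d(\Lambda)$, i.e.\ $d(\Lambda) = \tfrac{2}{3}|T^{\circ}|$. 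Once I show $\Lambda$ is $K^{\circ}$-admissible, $\Delta(K^{\circ}) \le d(\Lambda)$, and the hypothesis gives $d_{2,1}(K) \le |K||T^{\circ}|/6 \le (3\sqrt{3}\pi/4)/6 = \sqrt{3}\pi/8$, as desired.

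The core of the proof is therefore the admissibility of $\Lambda$. The crucial sub-claim is that $v_i - v_j \notin \operatorname{int}(K^{\circ})$ for all $i \ne j$. For $(i,j) = (1,2)$: I pick a supporting linear functional $\phi$ to $K^{\circ}$ at $v_1$, normalized so that $\phi(v_1) = 1$ and $\phi \le 1$ on $K^{\circ}$; central symmetry of $K^{\circ}$ also forces $\phi \ge -1$ on $K^{\circ}$. Since $v_3 \in K^{\circ}$ we have $|\phi(v_3)| \le 1$, i.e.\ $|-1 - \phi(v_2)| \le 1$, which forces $\phi(v_2) \le 0$. Hence $\phi(v_1 - v_2) = 1 - \phi(v_2) \ge 1$, so $v_1 - v_2 \notin \operatorname{int}(K^{\circ})$ because the interior of $K^{\circ}$ is contained in $\{\phi < 1\}$.

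To verify admissibility for an arbitrary nonzero $w \in \Lambda$, I would write $w = a v_1 + b v_2 + c v_3$; the relation $v_1 + v_2 + v_3 = 0$ lets us shift $(a,b,c) \mapsto (a-k, b-k, c-k)$ for any integer $k$, so I may choose the unique representative with $a,b,c \ge 0$ and $\min(a,b,c) = 0$. After relabeling this is $w = a v_i + b v_j$ with $a, b \ge 0$. If $\min(a,b) = 0$, then $w$ is a positive integer multiple of a single $v_\ell$ and, since $v_\ell \in \partial K^{\circ}$, $w \notin \operatorname{int}(K^{\circ})$. Otherwise $a, b \ge 1$, and (WLOG $a \ge b$) I rewrite $w = (a-b)v_i + b(-v_k)$; because $\|\cdot\|_{K^{\circ}}$ is a genuine norm (central symmetry of $K^{\circ}$) and $\|v_i\|_{K^{\circ}} = \|{-v_k}\|_{K^{\circ}} = 1$, the reverse triangle inequality yields $\|w\|_{K^{\circ}} \ge |a - 2b|$, which is $\ge 1$ unless $a = 2b$. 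In that borderline case $w = b(v_i - v_k)$ and the sub-claim directly gives $\|w\|_{K^{\circ}} \ge b \cdot 1 \ge 1$.

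The sole genuine obstacle is the sub-claim $v_i - v_j \notin \operatorname{int}(K^{\circ})$. Naive sub-additivity of $\|\cdot\|_{K^{\circ}}$ only provides the useless upper bound $\|v_i - v_j\|_{K^{\circ}} \le 2$; the lower bound requires fundamentally both the central symmetry of $K^{\circ}$ and the presence of the third vertex $v_k$ in $K^{\circ}$, combined through a supporting functional at $v_i$ as above to pin down the sign of $\phi(v_j)$. Once that sub-claim is in hand, everything else is bookkeeping and arithmetic chaining through Proposition~\ref{prop2.1}.
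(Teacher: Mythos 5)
Your proof is correct, and it is complete: the reduction via Proposition~\ref{prop2.1} to $d_{2,1}(K)=\tfrac{1}{4}|K|\Delta(K^{\circ})$ is right, the determinant computation $d(\Lambda)=\tfrac{2}{3}|T^{\circ}|$ is right, and the admissibility argument (supporting functional at $v_i$ plus central symmetry to force $\phi(v_j)\le 0$, then the case analysis on representatives of lattice points) is sound. However, you take the dual route rather than the paper's. The paper stays on the primal side: it polarizes $T^{\circ}$ to a central triangle $T=ABC$ circumscribed about $K$, uses the identity $|T||T^{\circ}|=\tfrac{27}{4}$ to get $|K|/|T|\le \pi/(3\sqrt{3})$, and then directly exhibits a non-separable lattice of translates of $K$ generated by $V_1=\tfrac{2}{3}(C-A)$ and $V_2=\tfrac{2}{3}(B-A)$, with $d(\Lambda)=\tfrac{8}{9}|T|$, the non-separability being justified only by reference to a figure. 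Your version constructs instead the $K^{\circ}$-admissible lattice spanned by two vertices of $T^{\circ}$ and passes through the duality formula; in doing so you essentially re-prove the ``converse'' half of Lemma~\ref{5.4} (which the paper cites from Cassels and uses only in the equality-case section). What your approach buys is a fully self-contained, figure-free verification of the key lattice property; what the paper's buys is brevity and a concrete picture of the non-separable configuration. The two constructions correspond to each other under the polarity of Section~5, and both reduce the bound to $d_{2,1}(K)\le \tfrac{1}{6}|K||T^{\circ}|$.
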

    \begin{figure}[h]
        \centering
        
        \includegraphics[scale=0.5]{"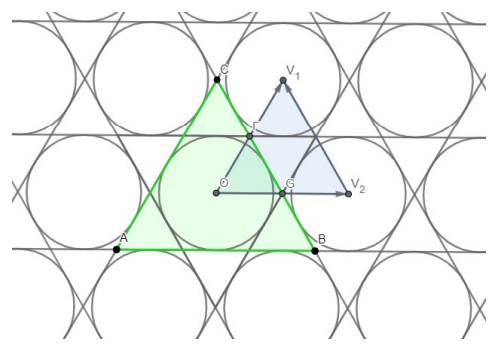"}  
        \caption{
             Non-separable lattice for a convex body $K$.
        }
        \label{fig:nsl}
    \end{figure}
    \begin{proof}
         $(T^{\circ})^{\circ} = T = ABC$ is a central triangle circumscribed to $K$. $|T||T^{\circ}|=\frac{27}{4}$, hence $\frac{|K|}{|T|}\leq \frac{\pi}{3\sqrt{3}}$.  Further, we can generate a non-separable lattice $\Lambda$ with $d(\Lambda) = \frac{8}{9}|T|$ by $V_{1}=\frac{2}{3}(C-A)$ and $V_{2}=\frac{2}{3}(B-A)$ as in Figure \ref{fig:nsl}.

    \end{proof}
    \begin{figure}[h]
    \centering
    \includegraphics[scale=0.7]{"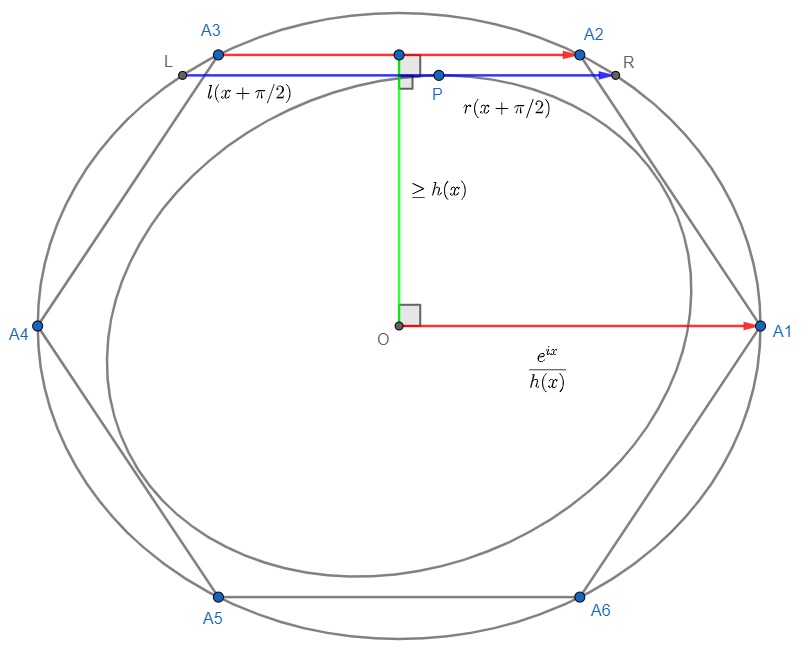"}  
    \caption{
        The outer body is $K^{\circ}$, the inner body is  $iK$; the point $P$ is $\gamma(x+\pi/2)$, $LR$ is the tangent line to $iK$ at the point $P$, hence we have $|LP| = l(x+\pi/2)$, $|PR| = r(x+\pi/2)$.
    }
    \end{figure}
  
    \begin{lemma}
    \label{3.3}
        Assume that $K$ is centrally symmetric, strictly convex and $C^{\infty}$
         boundary and let the minimal area of a central triangle inscribed in $K^{\circ}$ be equal to $\frac{3}{2}$. Then we have $\frac{3}{4}|K^{\circ}|\geq |K|$. 
    \end{lemma}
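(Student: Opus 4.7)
The plan is to use the family of central inscribed triangles in $K^{\circ}$ to construct an inner body $iK\subset K^{\circ}$ (as depicted in Figure 2), and to apply Proposition~\ref{2.3} together with the triangle-area hypothesis to derive the bound.

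Take $iK$ to be the envelope of the chords $\overline{LR}$ of $K^{\circ}$ that serve as edges of central inscribed triangles (opposite vertex $V_{3}=-(L+R)\in\partial K^{\circ}$). Parameterize $\partial iK$ by the support function of $iK$: $\gamma(t)=h_{iK}(t)e_{t}+h_{iK}'(t)e_{t}^{\perp}$, so $\det(\gamma',\gamma'')/|\gamma'|^{2}\equiv 1$. With $P=\gamma(t)$, $l(t)=|PR|$, $r(t)=|PL|$: the centroid divides each median of the triangle $V_{3}LR$ in the ratio $2{:}1$, so the distance from $V_{3}$ to the chord line is $3\,h_{iK}(t)$, giving triangle area $\tfrac{3}{2}(l+r)\,h_{iK}$. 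The minimum-area hypothesis then translates to the pointwise bound
\[
(l(t)+r(t))\,h_{iK}(t)\ge 1.
\]

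Applying Proposition~\ref{2.3} with $\Gamma(t)=\gamma(t)+l(t)\gamma'(t)/|\gamma'(t)|$, chosen so that $\Gamma$ traces $\partial K^{\circ}$ once, and using the identity $\det(\gamma',\gamma'')/|\gamma'|^{2}=1$ together with the symmetric version using $r$, one obtains
\[
|K^{\circ}|-|iK|=\tfrac{1}{2}\int_{0}^{2\pi}l^{2}\,dt=\tfrac{1}{4}\int_{0}^{2\pi}(l^{2}+r^{2})\,dt.
\]
Combining this with the AM--QM bound $l^{2}+r^{2}\ge(l+r)^{2}/2\ge 1/(2h_{iK}^{2})$ and the polar-area formula $|(iK)^{\circ}|=\tfrac{1}{2}\int h_{iK}^{-2}\,dt$ yields $|K^{\circ}|-|iK|\ge\tfrac{1}{4}|(iK)^{\circ}|$, while the inclusion $iK\subset K^{\circ}$ forces $K\subset(iK)^{\circ}$ and hence $|K|\le|(iK)^{\circ}|$.

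The hardest part will be sharpening these inequalities to obtain the exact factor $\tfrac{3}{4}$, since a naive combination only gives $|K^{\circ}|-|iK|\ge|K|/4$. Because each chord midpoint $-V_{3}/2$ lies on $\partial(K^{\circ}/2)$, I expect the envelope analysis to yield $iK=K^{\circ}/2$ (so $|iK|=|K^{\circ}|/4$, $(iK)^{\circ}=2K$, and $|(iK)^{\circ}|=4|K|$) together with the tangent-point-equals-midpoint identity $l=r$; under these identifications the AM--QM step becomes an equality, the polar inclusion becomes $|(iK)^{\circ}|\ge 4|K|$, and the estimate collapses to $|K^{\circ}|-|iK|\ge|K|$, which rearranges to $|K|\le\tfrac{3}{4}|K^{\circ}|$. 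The first-variation computation verifying that the characteristic point of each chord in the family coincides with its midpoint — and, more delicately, that the resulting envelope fills out $K^{\circ}/2$ — is the technical core; the ellipse case (where $iK=K^{\circ}/2$ is automatic by affine invariance with the disk) provides both the sanity check and the equality case.
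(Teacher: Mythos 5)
There is a genuine gap, and you have located it yourself: everything hinges on the identification $iK=K^{\circ}/2$ (equivalently, that each edge of a central inscribed triangle touches the envelope at its midpoint, so that $l=r$), and this is false for general $K$. The midpoint $-V_{3}/2$ of the chord does lie on $\partial(K^{\circ}/2)$, but the chord need not be tangent to $K^{\circ}/2$ there --- it can cut into the interior. Take $K^{\circ}$ to be (a smoothing of) the square $[-1,1]^{2}$: the points $V_{1}=(1,0)$, $V_{2}=(-1,1)$, $V_{3}=(0,-1)$ form a central inscribed triangle, and the edge $V_{1}V_{2}$ has midpoint $(0,\tfrac12)\in\partial(K^{\circ}/2)$ but also passes through $(\tfrac14,\tfrac38)$, an interior point of $K^{\circ}/2$. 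So the envelope of your family of chords is strictly smaller than $K^{\circ}/2$, the tangency points are not midpoints, and the identities $|iK|=\tfrac14|K^{\circ}|$ and $|(iK)^{\circ}|=4|K|$ both fail; as you concede, without them your bookkeeping returns only $|K^{\circ}|-|iK|\ge\tfrac14|K|$, which is useless because you have no lower bound on $|iK|$. The parts of your argument that do work (Proposition~\ref{2.3} with $\det(\gamma',\gamma'')/|\gamma'|^{2}\equiv 1$, the area formula $\tfrac32(l+r)h$ for a central triangle with one side on a line at distance $h$ from the origin, and the AM--QM step) coincide with the paper's.

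The paper escapes this trap by choosing $iK$ to be something whose area and polar area are known a priori: the $\pi/2$-rotation of $K$ itself, so that $|iK|=|K|$ and $\tfrac12\int h_{iK}^{-2}=|K^{\circ}|$ hold for free. It also works with a different family of chords: the edges of the inscribed affine regular hexagons $\mathrm{conv}(T\cup-T)$, i.e.\ chords $[A_2,A_3]$ with $A_2-A_3\in\partial K^{\circ}$ (difference on the boundary), rather than your triangle edges, which are chords with sum on the boundary. The area hypothesis forces each such hexagon edge, of length $1/h(x)$, to lie at distance at least $h(x)$ from the origin --- exactly the support value of the rotated $K$ in that normal direction --- so the rotated $K$ sits inside every hexagon, and by concavity of parallel chord lengths in a centrally symmetric body the tangent chord to $iK$ satisfies $l+r\ge 1/h(x-\pi/2)$. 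Feeding that into your own chain gives $2(|K^{\circ}|-|K|)\ge\tfrac14\int(l+r)^{2}\ge\tfrac12|K^{\circ}|$, i.e.\ $\tfrac34|K^{\circ}|\ge|K|$, with no envelope to identify. To rescue your write-up you should replace your envelope by this rotated copy of $K$; proving your claim that the envelope equals $K^{\circ}/2$ is not an option, since it characterizes ellipses.
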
 
    
    \begin{proof}
       For each central triangle $T$ inscribed in $K^{\circ }$ consider an inscribed affine regular hexagon $conv(T\cup -T)$ and let $A_{1}..A_{6}$ be one of them. We put  $A_{1}=\frac{e^{ix}}{h(x)} = A_{2}-A_{3}$. 
        
        From the minimum area condition we have $|A_{2}A_{3}O|\geq \frac{1}{2}$, hence the distance $d(O,A_{2}A_{3})\geq\frac{1}{|A_{2}A_{3}|} = h(x)$. Further, let $A$ be the intersection of all inscribed in $K^{\circ}$ affine regular hexagons. Then we have $iK\subset A \subset K^{\circ}$, where $iK$ is  $\frac{\pi}{2}$-rotated $K$. 
        
        Let $\gamma$ parametrize $iK$ in terms of the support function and let $L(x)$ be the tangent line to $iK$ at the point $\gamma(x)$.
        Further, define $l(x)$ and $r(x)$ as 
        lengths of left and right parts of $|L(x)\cap K^{\circ}|$ with respect to  $\gamma(x)$. Since $iK\subset A$ and since $K^{\circ}$ is centrally symmetric we have  $r(x)+l(x)\geq \frac{1}{h(x-\pi/2)}$. Then using Proposition \ref{2.3} we get: 
        \begin{align*}
            |K^{\circ}| - |K| = |K^{\circ}| - |iK| = \frac{1}{2}\int_{0}^{2\pi}l^2 = \frac{1}{2}\int_{0}^{2\pi}r^2. 
        \end{align*}
        Therefore: 
        \begin{align*}
            2(|K^{\circ}| - |K|) = \frac{1}{2}\int_{0}^{2\pi}(l^2+r^2)\geq \frac{1}{4}\int_{0}^{2\pi}(l+r)^2 \geq 
            \frac{1}{4}\int_{0}^{2\pi}\frac{1}{h(x-\pi/2)^2} = \frac{1}{2}|K^{\circ}|.
        \end{align*}
        Thus we have $\frac{3}{4}|K^{\circ}|\geq |K|$.
    \end{proof}
    
    \begin{lemma}
    \label{4.4}
        Let $K\subset \R^{2}$ be a centrally symmetric convex body and let the minimal area of a  central triangle inscribed in $K^{\circ}$ be equal to $\frac{3}{2}$. Тhen we have $|K|\leq\frac{\sqrt{3}\pi}{2}$, with equality if and only if $K$ is an ellipse. 
    \end{lemma}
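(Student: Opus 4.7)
The plan is to combine Lemma \ref{3.3} with the Blaschke-Santaló inequality. First I would assume, as in Lemma \ref{3.3}, that $K$ is strictly convex with $C^{\infty}$ boundary. Writing $m(K^{\circ})$ for the minimal area of a central triangle inscribed in $K^{\circ}$, under the normalisation $m(K^{\circ}) = 3/2$ Lemma \ref{3.3} gives $|K| \le \frac{3}{4}|K^{\circ}|$, while the Blaschke-Santaló inequality gives $|K||K^{\circ}| \le \pi^{2}$. Multiplying these two bounds,
\[
|K|^{2} \le \tfrac{3}{4}|K||K^{\circ}| \le \tfrac{3\pi^{2}}{4},
\]
so $|K| \le \frac{\sqrt{3}\pi}{2}$, which is the claimed inequality in the smooth case.

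To drop the smoothness assumption, I would approximate a general centrally symmetric convex body $K$ in the Hausdorff metric by centrally symmetric, strictly convex, $C^{\infty}$ bodies $K_{n}$ (for instance by mollifying the support function after Minkowski-summing a small ball). The functional $m$ is continuous under Hausdorff convergence: inscribed central triangles form a compact family, and uniform convergence of support functions yields continuity of both vertex positions and areas. Rescaling $K_{n}' = c_{n} K_{n}$ with $c_{n} = \sqrt{2 m(K_{n}^{\circ})/3}$ enforces $m((K_{n}')^{\circ}) = 3/2$, so the smooth case applied to $K_{n}'$ reads $|K_{n}| \cdot m(K_{n}^{\circ}) \le \frac{3\sqrt{3}\pi}{4}$; letting $n\to\infty$ gives the desired inequality for $K$.

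For the equality case, I would observe that if $|K| = \frac{\sqrt{3}\pi}{2}$ and $m(K^{\circ}) = 3/2$, then the chain $|K|^{2} \le \frac{3}{4}|K||K^{\circ}| \le \frac{3\pi^{2}}{4}$ (which extends to non-smooth $K$ by passing Lemma \ref{3.3} and the Blaschke-Santaló bound through the approximation) must be saturated at both steps. In particular $|K||K^{\circ}| = \pi^{2}$, and the equality case of Blaschke-Santaló in $\R^{2}$ for centrally symmetric bodies then forces $K$ to be an ellipse. The main obstacle I anticipate is not in the core argument, which is essentially a two-line product of known inequalities, but rather in the careful justification of the approximation and of the continuity of $m$ under Hausdorff convergence; both are standard consequences of uniform convergence of support functions but must be checked so that the equality case is correctly inherited by the limit.
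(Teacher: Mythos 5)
Your proposal is correct and follows essentially the same route as the paper: approximate by smooth strictly convex bodies, rescale so that the minimal central inscribed triangle of the polar has area $\frac{3}{2}$, apply Lemma \ref{3.3} to get $|K|\le\frac{3}{4}|K^{\circ}|$ in the limit, and combine with the Blaschke--Santal\'o inequality, whose equality case yields the ellipse characterization. Your extra care about the continuity of the minimal-triangle functional under Hausdorff convergence is a point the paper dismisses as obvious, but it is the same argument.
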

    
    \begin{proof}
        Consider the approximation of $K$ by smooth and strongly convex centrally symmetric bodies $K_{\epsilon}\to K$ and let the minimal area of a central triangle inscribed in $K_{\epsilon}$ be equal to $S_{\epsilon}$.  Then it is obvious that $S_{\epsilon}\to\frac{3}{2}$. Further,  $\sqrt{\frac{2}{3}S_{\epsilon}}K_{\epsilon}$ satisfies the condition of Lemma \ref{3.3}, hence we have $\frac{3}{4}|(\sqrt{\frac{2}{3}S_{\epsilon}}K_{\epsilon})^{\circ}|\geq |\sqrt{\frac{2}{3}S_{\epsilon}}K_{\epsilon}|$. Since the volume is continuous, we have  $\frac{3}{4}|K^{\circ}|\geq |K|$. Finally, using the Blaschke-Santaló inequality we get $\frac{3}{4}\pi^2\geq \frac{3}{4}|K^{\circ}||K|\geq|K|^2$, with equality only for an ellipse.    
    \end{proof}
    \begin{proof}[Proof of Theorem \ref{1.1} (inequality part)]
        Obviously follows from Lemma \ref{3.1}, Lemma \ref{4.2} and Lemma \ref{4.4}.
    \end{proof}
\end{section}

\begin{section}{The equality case}
    \noindent The following Lemmas are well-known. 
    \begin{lemma}
    \label{5.1}
    \cite{4}
        $K+\Z^{n}$ is non-separable if and only if $\omega_{K}(u)\geq 1$ for all $u\in\Z^{n}\setminus{0}.$    
    \end{lemma}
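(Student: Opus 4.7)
The plan is to translate non-separability into an analytic condition one direction at a time, and then split the analysis according to whether the direction is rational or irrational with respect to $\Z^{n}$.

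First I would write an arbitrary affine hyperplane as $H_{u,c}=\{x\in\R^{n}:\langle u,x\rangle = c\}$ and set $m_{u}=\min_{y\in K}\langle u,y\rangle$, $M_{u}=\max_{y\in K}\langle u,y\rangle$, so that $\omega_{K}(u)=M_{u}-m_{u}$. A direct check shows that $H_{u,c}$ meets $K+\Z^{n}$ iff there exists $k\in\Z^{n}$ with $c-\langle u,k\rangle\in[m_{u},M_{u}]$. Hence $K+\Z^{n}$ is non-separable exactly when, for every $u\in\R^{n}\setminus\{0\}$,
$$
\bigcup_{k\in\Z^{n}}\bigl[m_{u}+\langle u,k\rangle,\; M_{u}+\langle u,k\rangle\bigr]=\R.
$$

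Next I would analyse the additive subgroup $G_{u}=\{\langle u,k\rangle:k\in\Z^{n}\}\subset\R$; every additive subgroup of $\R$ is either dense in $\R$ or of the form $\alpha\Z$ for some $\alpha\ge 0$. In the dense case, since $K$ has nonempty interior we have $M_{u}>m_{u}$, so density of $G_{u}$ already puts a point of $G_{u}$ in every interval of the form $[c-M_{u},c-m_{u}]$, and the union above fills $\R$ automatically, with no width condition needed. In the discrete case $G_{u}=\alpha\Z$ with $\alpha>0$, the translated intervals cover $\R$ iff $\omega_{K}(u)\ge\alpha$.

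Finally I would observe that $G_{u}$ is discrete precisely when $u=\lambda u'$ for some primitive $u'\in\Z^{n}$ and some $\lambda>0$; in that case $\alpha=\lambda$, and by positive homogeneity of $\omega_{K}$ the inequality $\omega_{K}(u)\ge\lambda$ becomes $\omega_{K}(u')\ge 1$. Since every $u\in\Z^{n}\setminus\{0\}$ can be written as $du'$ with $u'$ primitive and $d\ge 1$, and $\omega_{K}(du')=d\omega_{K}(u')\ge d\ge 1$, the primitive-vector condition is equivalent to $\omega_{K}(u)\ge 1$ for all $u\in\Z^{n}\setminus\{0\}$, yielding the stated equivalence. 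The only genuine subtlety is verifying that irrational directions impose no separate width condition, and that is precisely what the dense-subgroup observation handles.
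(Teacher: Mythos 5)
Your argument is correct and complete: the reduction of non-separability to the covering condition $\bigcup_{k}\bigl[m_{u}+\langle u,k\rangle,\,M_{u}+\langle u,k\rangle\bigr]=\R$, the dichotomy between dense and discrete subgroups $G_{u}$ of $\R$, and the final reduction to primitive integer vectors via homogeneity of $\omega_{K}$ all check out. Note, however, that the paper does not prove this lemma at all --- it is stated as well-known and attributed to Fejes T\'oth and Makai Jr.\ --- so there is no in-paper argument to compare against; your write-up supplies a self-contained proof where the paper only gives a citation.
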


    \begin{lemma}
          Let $K$ be a centrally-symmetric convex body. Then $\Z^{n}+K$ is non-separable if and only if $\Z^{n}$ is $\frac{1}{2}K^{\circ}$-admissible. 
    \end{lemma}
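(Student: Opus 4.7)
The plan is to chain this equivalence through Lemma \ref{5.1}, which already translates non-separability of $\Z^{n}+K$ into a width condition, and then to match that width condition with the admissibility condition by unwinding the definitions of polar body and support function.

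First I would invoke Lemma \ref{5.1} to reduce the left-hand side to the statement that $\omega_{K}(u)\ge 1$ for every $u\in\Z^{n}\setminus\{0\}$. Using central symmetry of $K$, we have $K=-K$, so
\[
\omega_{K}(u)=\max_{x,y\in K}\langle u,x-y\rangle=\max_{x\in K}\langle u,x\rangle+\max_{y\in K}\langle u,-y\rangle=2h_{K}(u),
\]
where $h_{K}$ is the support function of $K$. Thus the non-separability condition becomes $h_{K}(u)\ge \tfrac{1}{2}$ for all $u\in\Z^{n}\setminus\{0\}$.

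Next I would unwind the right-hand side. By definition $\Z^{n}$ is $\tfrac{1}{2}K^{\circ}$-admissible if and only if no nonzero lattice point lies in $\operatorname{int}(\tfrac{1}{2}K^{\circ})$. Since $K$ is a convex body with $0\in\operatorname{int}K$, the polar $K^{\circ}$ is a convex body with $0$ in its interior, and one has the standard description
\[
\operatorname{int}(K^{\circ})=\{x\in\R^{n}\mid h_{K}(x)<1\},
\]
so $u\in\operatorname{int}(\tfrac{1}{2}K^{\circ})$ iff $2u\in\operatorname{int}(K^{\circ})$ iff $h_{K}(u)<\tfrac{1}{2}$. Therefore admissibility is equivalent to $h_{K}(u)\ge\tfrac{1}{2}$ for every $u\in\Z^{n}\setminus\{0\}$, matching the condition obtained from Lemma \ref{5.1}.

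There is really no deep obstacle here; the only point requiring a little care is the identification of $\operatorname{int}(K^{\circ})$ with $\{h_{K}<1\}$, which needs $K$ compact with $0$ in its interior (both true for a centrally symmetric convex body). If the author prefers to avoid this standard fact, the same conclusion can be reached directly: the closed set $\tfrac{1}{2}K^{\circ}=\{x:h_{K}(x)\le\tfrac{1}{2}\}$ has interior $\{h_{K}<\tfrac{1}{2}\}$ by continuity and positive homogeneity of $h_{K}$. Combining the two equivalent reformulations yields the lemma.
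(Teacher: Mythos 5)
Your proposal is correct and follows exactly the route the paper intends: the paper's own proof is simply ``Obviously follows from Lemma \ref{5.1},'' and your argument is the fleshed-out version of that reduction, correctly using central symmetry to get $\omega_{K}(u)=2h_{K}(u)$ and the identification of $\operatorname{int}(\tfrac{1}{2}K^{\circ})$ with $\{h_{K}<\tfrac{1}{2}\}$.
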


    \begin{proof}
        Obviously follows from Lemma \ref{5.1}.  
    \end{proof}

    \begin{lemma}
        $\Z^{n}$ is *critical for $K$ if and only if $\Z^{n}$ is critical for $\frac{1}{2}K^{\circ}$. 
    \end{lemma}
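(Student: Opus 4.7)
The plan is to deduce this from a general duality between non-separability of $\Lambda+K$ and $\frac{1}{2}K^\circ$-admissibility of the dual lattice $\Lambda^*$, combined with the self-duality of $\Z^n$ and the multiplicativity $d(\Lambda)\,d(\Lambda^*)=1$.

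First I would extend the previous lemma from $\Z^n$ to an arbitrary lattice $\Lambda=A\Z^n$. Applying $A^{-1}$ and invoking Lemma \ref{5.1}, one sees that $\Lambda+K$ is non-separable iff $\omega_K(v)\geq 1$ for every nonzero $v$ in $\Lambda^*=(A^{-1})^T\Z^n$. For centrally symmetric $K$ we have $\omega_K(v)=2h_K(v)$, and $h_K(v)<\tfrac12$ is precisely the condition $v\in \operatorname{int}(\tfrac12 K^\circ)$. Hence
\begin{equation*}
    \Lambda+K \text{ is non-separable} \iff \Lambda^* \text{ is } \tfrac12 K^\circ\text{-admissible}.
\end{equation*}

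Next I would translate this into an equality of critical determinants. Since $d(\Lambda)\,d(\Lambda^*)=1$, taking the supremum on the left over all non-separable lattices corresponds to taking the infimum on the right over all $\tfrac12 K^\circ$-admissible lattices, giving
\begin{equation*}
    \Delta^{*}(K)\cdot\Delta\!\left(\tfrac{1}{2}K^\circ\right)=1.
\end{equation*}

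Finally I would finish by using that $\Z^n$ is self-dual, $(\Z^n)^*=\Z^n$, with determinant $1$. By the displayed equivalence, $\Z^n+K$ is non-separable iff $\Z^n$ is $\tfrac12 K^\circ$-admissible, which is the common admissibility condition in both statements. Under this, *criticality of $\Z^n$ for $K$ says $d(\Z^n)=\Delta^*(K)$, i.e.\ $\Delta^*(K)=1$, while criticality of $\Z^n$ for $\tfrac12 K^\circ$ says $\Delta(\tfrac12 K^\circ)=1$; these two statements are equivalent via the product relation above. The main technical point to get right is the generalization of Lemma 5.2 to arbitrary lattices via the dual lattice, since the paper previously stated this equivalence only for $\Z^n$; once that is in place, the rest is purely a dualization bookkeeping argument.
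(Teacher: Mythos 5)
Your proof is correct and follows essentially the same route as the paper: the paper's argument also passes from a $\tfrac{1}{2}K^{\circ}$-admissible lattice $A\Z^{n}$ to the non-separable lattice $(A^{*})^{-1}\Z^{n}$, which is exactly your dual-lattice correspondence $\Lambda\leftrightarrow\Lambda^{*}$ together with $d(\Lambda)d(\Lambda^{*})=1$. Your version merely packages the paper's contradiction argument as the clean identity $\Delta^{*}(K)\cdot\Delta\bigl(\tfrac{1}{2}K^{\circ}\bigr)=1$.
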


    \begin{proof}
        Let $\Lambda=A\Z^{2}$ be a $\frac{1}{2}K^{\circ}$-admissible lattice with $d(\Lambda)<1$, that is, $|detA|<1$. Therefore $\Z^{2}$ is $\frac{1}{2}A^{-1}K^{\circ}$-admissible, hence we have that $(A^{*})^{-1}\Z^{2}+K$ is non-separable, but $det(A^{*})^{-1} > 1$, a contradiction.

        Proof in the opposite direction is similar. 
    \end{proof}
    
    \begin{lemma}
    \label{5.3}
        If $K \neq -K$, than $d_{n,n-1}(K) < d_{n,n-1}(K-K)$.     
    \end{lemma}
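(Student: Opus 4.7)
The plan is to reduce the inequality $d_{n,n-1}(K) < d_{n,n-1}(K-K)$ to a strict volume comparison handled by the Brunn--Minkowski theorem, bridged by the lattice-width characterization of non-separability in Lemma~\ref{5.1}.

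First I would compute the lattice width of $K-K$. Since support functions add under Minkowski addition, $h_{K-K}(u) = h_K(u) + h_K(-u) = \omega_K(u)$, and since $K-K$ is centrally symmetric, $\omega_{K-K}(u) = 2h_{K-K}(u) = 2\omega_K(u)$. Next, I would extend Lemma~\ref{5.1} to an arbitrary lattice $\Lambda = A\Z^{n}$: the substitution $x \mapsto A^{-1}x$ reduces non-separability of $\Lambda+K$ to non-separability of $\Z^{n} + A^{-1}K$, which by Lemma~\ref{5.1} amounts to $\omega_K(u) \ge 1$ for every nonzero $u$ in the dual lattice $\Lambda^{*}$. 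Applied to $K-K$, this criterion becomes $2\omega_K(u) \ge 1$ on $\Lambda^{*}\setminus\{0\}$, equivalently $\omega_K(v) \ge 1$ on $(\tfrac{1}{2}\Lambda)^{*}\setminus\{0\}$, i.e.\ $\tfrac{1}{2}\Lambda + K$ is non-separable. Taking suprema of $d(\Lambda)$ and noting $d(\tfrac{1}{2}\Lambda) = 2^{-n}d(\Lambda)$ yields
$$
\Delta^{*}(K-K) \;=\; 2^{n}\,\Delta^{*}(K).
$$

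Substituting into $d_{n,n-1}(K) = |K|/\Delta^{*}(K)$ gives
$$
\frac{d_{n,n-1}(K-K)}{d_{n,n-1}(K)} \;=\; \frac{|K-K|}{2^{n}\,|K|},
$$
so the claim reduces to showing $|K-K| > 2^{n}|K|$. This is the Brunn--Minkowski inequality $|K+(-K)|^{1/n} \ge |K|^{1/n} + |-K|^{1/n} = 2|K|^{1/n}$, whose equality case holds iff $K$ and $-K$ are homothetic, which (by equal volumes) forces $-K$ to be a translate of $K$. Under the hypothesis $K \ne -K$, interpreted via the translation-invariance of $d_{n,n-1}$ as ``$K$ is not a translate of $-K$,'' this equality case is ruled out and we obtain the strict inequality.

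The only step needing real care is the extension of Lemma~\ref{5.1} to arbitrary lattices: one must verify that the width constraint reduces to primitive vectors in $\Lambda^{*}$ and transforms correctly under the rescaling $\Lambda \mapsto \tfrac{1}{2}\Lambda$ (so that $(\tfrac{1}{2}\Lambda)^{*} = 2\Lambda^{*}$ indeed supplies the right test directions). Once in place, the conclusion follows immediately from Brunn--Minkowski.
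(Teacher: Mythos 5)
Your argument is correct and follows essentially the same route as the paper: the lattice-width criterion of Lemma~\ref{5.1} shows that the non-separable lattices for $K$ and for $K-K$ correspond under the scaling $\Lambda\mapsto\tfrac{1}{2}\Lambda$ (the paper phrases this as $\omega_{K}\equiv\omega_{(K-K)/2}$ and works with $(K-K)/2$ instead of tracking the factor $2^{n}$), and the strict inequality then comes from the Brunn--Minkowski inequality. Your remark that the hypothesis must be read as ``$K$ is not centrally symmetric about any point'' in order to exclude the Brunn--Minkowski equality case is a point the paper's proof glosses over, and is worth keeping.
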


    \begin{proof}
            From The Brunn–Minkowski inequality we have the estimation $|K|<|(K-K)/2|$.  
            Also it is obvious that $d_{2,1}(K-K)=d_{2,1}((K-K)/2)$ and that $\omega_{K}\equiv \omega_{(K-K)/2}$. Therefore from Lemma \ref{5.1} it follows that for any lattice $\Lambda$ the lattice of translates $K+\Lambda$ is non-separable if and only if $(K-K)/2 +\Lambda$ is non-separable. 
    \end{proof}

    \begin{lemma}\cite{3}
    \label{5.4}
        Let $\Lambda$ be $K$-critical for a centrally-symmetric $K\subset\R^{2}$, and let $C$ be the boundary of $K$. Then one can find three pairs of points $\pm p_{1}, \pm p_{2}, \pm p_{3}$ of the lattice on $C$. Moreover these three points can be
        chosen such that $p_{1} + p_{2} = p_{3}$ 
        and any two vectors among $p_{1}, p_{2}, p_{3}$ form a basis of $\Lambda$.
        
        Conversely, if $p_{1}, p_{2}, p_{3}$ satisfying $p_{1}+p_{2}=p_{3}$ are on C, then the lattice generated by $p_{1}$ and
        $p_{2}$ is $K$-admissible
    \end{lemma}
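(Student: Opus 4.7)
The statement has two parts — existence of three pairs of boundary lattice points with the stated algebraic structure (forward direction) and admissibility of the lattice generated by any three such points (converse) — and I would treat them separately.

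For the forward direction, the plan is a variational argument in the $4$-dimensional space of $2\times 2$ matrices. Since $\Lambda$ is $K$-critical, a scaling argument first yields at least one pair $\pm p_1 \in \Lambda \cap \partial K$: otherwise $(1-\varepsilon)\Lambda$ would be $K$-admissible with strictly smaller determinant for small $\varepsilon > 0$. To find more pairs, parametrize perturbations as $\Lambda_t = (I+tA)\Lambda$. The determinant changes as $1 + t\,\mathrm{tr}(A) + O(t^2)$, so it decreases iff $\mathrm{tr}(A) < 0$, while admissibility is preserved to first order iff $\langle Ap, n_p\rangle \ge 0$ for each boundary lattice pair $\pm p$ with outer normal $n_p$. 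By a Farkas-type LP duality, no admissible determinant-decreasing direction $A$ exists precisely when the trace functional is a nonnegative combination of the functionals $A \mapsto \langle Ap_i, n_{p_i}\rangle$, and a dimension count shows that at least three pairs are needed. The specific identity $p_3 = p_1+p_2$ (after relabeling) and the basis property then follow from the geometry of the six boundary points: their convex hull is a centrally symmetric hexagon inscribed in $K$, and if any two of $p_1,p_2,p_3$ failed to generate $\Lambda$, a shorter primitive lattice vector would land in $\mathrm{int}\,K$, contradicting admissibility.

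For the converse, given $p_1,p_2,p_3 \in \partial K$ with $p_3 = p_1+p_2$ and $\Lambda = \Z p_1 + \Z p_2$, I would show by induction on $|a|+|b|$ that no lattice point $v = a p_1 + b p_2$ with $(a,b)\ne(0,0)$ lies in $\mathrm{int}\,K$. The base cases $|a|+|b|\le 2$ follow by direct convexity: $\pm p_i \in \partial K$ by hypothesis; if $2p_i \in \mathrm{int}\,K$ then $p_i = \tfrac12(2p_i)+\tfrac12\cdot 0 \in \mathrm{int}\,K$, a contradiction; if $p_1-p_2 \in \mathrm{int}\,K$ then its midpoint with $-p_3 \in K$ is $-p_2 \in \mathrm{int}\,K$, again a contradiction. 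For the inductive step with $|a|+|b|\ge 3$, pick $w \in \{0,\pm p_1,\pm p_2,\pm p_3\}$ of the same parity as $(a,b)$ in the basis $(p_1,p_2)$, chosen so that the coordinate sum of $u := (v+w)/2 \in \Lambda$ is strictly smaller than $|a|+|b|$. Since $v\in\mathrm{int}\,K$ and $w\in K$, the open-segment property of convex sets gives $u \in \mathrm{int}\,K$, and the inductive hypothesis yields a contradiction — unless $u = 0$, in which case $v = -w$ has $|a|+|b|\le 2$, contradicting $|a|+|b|\ge 3$.

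The main obstacle is the forward direction: while the duality argument giving three pairs is standard, extracting the clean algebraic statement $p_3 = p_1+p_2$ with the basis property takes care. One must rule out degenerate configurations (e.g.\ where boundary pairs accumulate on only two lines through the origin, or where the generated sublattice is a proper subset of $\Lambda$) by invoking admissibility for specific shorter lattice vectors that these degeneracies would produce; this combinatorial case analysis is the most delicate step.
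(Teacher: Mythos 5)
The paper offers no proof of this lemma at all --- it is imported verbatim from Cassels \cite{3} --- so there is no in-paper argument to compare against; you are attempting strictly more than the paper does. Your converse direction is complete and correct: the strong induction on $|a|+|b|$, with base cases handled by the midpoint trick ($2p_i$ averaged with $0$, and $p_1-p_2$ averaged with $-p_3$) and the inductive step via $u=(v+w)/2$ for a parity-matching $w\in\{0,\pm p_1,\pm p_2,\pm p_3\}$, works; the crude bound $|u|_1\le(|a|+|b|+2)/2<|a|+|b|$ for $|a|+|b|\ge 3$ closes the induction, and the case $u=0$ is correctly excluded.

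The forward direction, however, has a genuine gap at the step ``a dimension count shows that at least three pairs are needed.'' First-order stationarity does not force three boundary pairs. Take $K$ the unit disk and $\Lambda=\Z^2$: the only boundary pairs are $\pm e_1,\pm e_2$ with outer normals $n_{e_i}=e_i$, and your Farkas condition is already satisfied with two pairs, since $I=e_1e_1^T+e_2e_2^T$ --- equivalently, any $A$ with $\langle Ae_1,e_1\rangle\ge 0$ and $\langle Ae_2,e_2\rangle\ge 0$ has $\mathrm{tr}(A)=A_{11}+A_{22}\ge 0$. Yet $\Z^2$ is not critical for the disk (its critical determinant is $\sqrt{3}/2<1$); the determinant can only be decreased along directions, such as the shear fixing $e_1$ and sliding $e_2$ along the circle, whose effect on both the determinant and the active constraints vanishes to first order. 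So your LP-duality argument establishes only first-order stationarity, which two pairs can realize, and ``at least three pairs'' does not follow from it. The classical proof must, and does, argue non-infinitesimally: assuming at most two pairs lie on the boundary, it exhibits an explicit admissible lattice of strictly smaller determinant by moving the second pair a finite distance along the boundary --- exactly the second-order move your framework cannot see. The extraction of $p_3=p_1+p_2$ and the basis property is also only sketched in your write-up, but the dimension-count step is the substantive error to repair.
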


    \begin{proof}[Proof of Theorem \ref{1.1} (the equality case)]
        Suppose that $d_{2,1}(K) = \frac{\pi\sqrt{3}}{8}$. It follows from Lemma \ref{5.3} that $K$ has to be a centrally symmetric body. Let $\Z^2$ be a critical lattice for $K^{\circ}$, so $|K|=\frac{\sqrt{3}\pi}{2}$. Then, from Lemma \ref{5.4} we conclude that the minimal area of a central triangle inscribed in $K^{\circ}$ is equal to $\frac{3}{2}$. Hence there is an equality in Lemma \ref{4.4}. Thus $K$ is an ellipse.  
    \end{proof}

\end{section}

\begin{section}{Proof of the Theorem 2.2 
}

\begin{proof}[Proof of Theorem \ref{th2.2}.]
    Since $d_{3,2}(K)\leq d_{3,2}(K-K)$, it suffices to сonsider the case of a centrally symmetric $K$. We also assume that $K$ is strongly convex and has $C^{\infty}$ boundary.
    
    Let us construct a lattice  packing for $K^{\circ}$. For $h\in S^{2}$ let the critical lattice for $K^{\circ}\cap h^{\perp}$ corresponds to an affine regular hexagon $A_{1}..A_{6}$ inscribed in $K^{\circ}\cap h^{\perp}$. The horizontal part of of the lattice to be constructed will be generated by the vectors $2A_{1}$ and $2A_{2}$. Denote $$\Lambda_{h^{\perp}}:=\{2A_{1}m+2A_{2}n | n,m\in\ \Z\}.$$
    
    It is easy to see that for any $w \in \Lambda_{h^{\perp}}$ we have $Int(K^{\circ}) \cap Int(K^{\circ}+w) = \emptyset$. Further, define the third generating vector $v := h\cdot2\max_{x\in K^{\circ}}\langle x,h\rangle $. Thus we constructed the lattice $\Lambda := \{ vn + \Lambda_{h^{\perp}} | n\in \Z \}$ and $K^{\circ} + \Lambda $ is obviously a lattice packing of $K^{\circ}$.

    Let $\frac{|K^{\circ}|}{\delta_{L}(K^{\circ})} = 8 \frac{\sqrt{3}\pi}{2}.$ Then $d(\Lambda)\geq 8 \frac{\sqrt{3}\pi}{2}$, thus we have 
    $$
    \frac{1}{8}d(\Lambda) = \frac{1}{4}d(\Lambda_{h^{\perp}})\frac{1}{2}|v| = \Delta(K^{\circ}\cap h^{\perp}) d_{h}\geq \frac{\sqrt{3}\pi}{2}, \text{ where $d_{h}:= \frac{1}{2}|v|.$}
    $$
    
    It is well-known that for $h\in S^{2}$ we have $K^{\circ}\cap h^{\perp} = (Pr_{h^{\perp}}K)^{\circ}$. Therefore by Lemma \ref{4.4} we get $\Delta(K^{\circ}\cap h^{\perp})|Pr_{h^{\perp}}K|\leq\frac{\sqrt{3}\pi}{2}, $ so $d_{h}\geq |Pr_{h^{\perp}}K|.$ 
     Therefore $\Pi K\subset K^{\circ}$ and $ K\subset \Pi^{\circ}K $. Then by using Petty's inequality we obtain the estimate:
    $$
    |K|^3\leq|\Pi^{\circ}K||K|^{2}\leq\left(\frac{4}{3}\right)^{3}.
    $$
    Thus $$d_{3,2}(K) = \frac{|K||K^{\circ}|}{64\delta_{L}(K^{\circ})}\leq \frac{\frac{4}{3}\frac{8\sqrt{3}\pi}{2}}{64} = \frac{\pi}{4\sqrt{3}}.$$
\end{proof}

\end{section}

\section*{Acknowledgement}
\noindent I would like to thank my research advisor Nikita Kalinin for the profitable discussions about the conjecture and help with finding the right approach to solve it.

\textit{E-mail address:} \href{mailto:arkadiy.aliev@gmail.com}{arkadiy.aliev@gmail.com} 

\end{document}